\newtheorem{thm}{Theorem}[section]
\newtheorem{lem}[thm]{Lemma}
\newtheorem{cor}[thm]{Corollary}
\newtheorem{prop}[thm]{Proposition}
\theoremstyle{plain}
\newtheorem{main}{Theorem}
\newtheorem{corm}[main]{Corollary}
\theoremstyle{definition}
\newtheorem{rem}[thm]{Remark}
\newtheorem*{ack}{Acknowledgments}
\DeclareMathOperator{\symrank}{symrank}
\DeclareMathOperator{\Iso}{Iso}
\newcommand{\R}{\mathbf{R}}
\newcommand{\Z}{{\mathbf{Z}}}
\newcommand{\C}{{\mathbf{C}}}
\newcommand{\cp}{\mathbf{CP}}
\newcommand{\sph}{\mathbf{S}}
\newcommand{\G}{\operatorname{G}}
\newcommand{\M}{\operatorname{M}}
\newcommand{\HH}{\operatorname{H}}
\newcommand{\SU}{\operatorname{SU}}
\newcommand{\SO}{\operatorname{SO}}
\newcommand{\U}{\operatorname{U}}
\newcommand{\Ker}{\operatorname{Ker}}
\renewcommand{\lim}[1]{\mathop{\underset{#1} {\underset \longleftarrow
{\text{\rm lim}}}}}
\newcommand{\T}{\operatorname{T}}
\newcommand{\B}{\operatorname{B}}
\newcommand{\p}{\operatorname{P}}
\newcommand{\fix}{\operatorname{Fix}}
\newcommand{\bq}{/ \hspace{-.15cm} /}
\newcommand{\lra}{\longrightarrow}
\newcommand{\vep}{\varepsilon}
\def\x{\times}
\def\<{\langle}
\def\>{\rangle}
\def\met{\<\, ,\, \>}
\newcommand{\SUU}[1]{\operatorname{S}(\U(#1) \x \U(#1))}
\def\bsm{\begin{smallmatrix}}
\def\esm{\end{smallmatrix}}
\def\bpm{\begin{pmatrix}}
\def\epm{\end{pmatrix}}
\def\bbm{\begin{bmatrix}}
\def\ebm{\end{bmatrix}}
\def\beq{\begin{equation}}
\def\eeq{\end{equation}}
\renewcommand{\geq}{\geqslant}
\renewcommand{\leq}{\leqslant}
\numberwithin{equation}{section}
\newcommand{\spacing}[1]{\renewcommand{\baselinestretch}{#1}\large\normalsize}
\begin{document}




\title[Cohomogeneity-two torus actions in low dimensions]{Cohomogeneity-two torus actions on non-negatively curved manifolds of low dimension}

\author[F.\ Galaz-Garcia and M.\ Kerin]{Fernando Galaz-Garcia and Martin Kerin$^\ast$}

\thanks{$^\ast$ This research was carried out as part of SFB 878: Groups, Geometry \& Actions, at the University of M\"unster.}

\address{Mathematisches Institut, Einsteinstr. 62, 48149 M\"unster, Germany}
\email{f.galaz-garcia@uni-muenster.de}
\email{m.kerin@math.uni-muenster.de}


\subjclass[2000]{53C20}
\keywords{non-negative curvature, circle action, torus action, $4$-manifolds, $5$-manifolds,  biquotients, Lie groups, symmetry rank, cohomogeneity.}


\begin{abstract}
Let $\M^n$, $n \in \{4,5,6\}$, be a compact, simply connected $n$-manifold which admits some Riemannian metric with non-negative curvature and an isometry group of maximal possible rank.  Then any smooth, effective action on $\M^n$ by a torus $\T^{n-2}$ is equivariantly diffeomorphic to an isometric action on a normal biquotient.  Furthermore, it follows that any effective, isometric circle action on a compact, simply connected, non-negatively curved four-dimensional manifold 
 is equivariantly diffeomorphic to an effective, isometric action on a normal biquotient.
\end{abstract}

\date{\today}

\maketitle




\normalsize
\thispagestyle{empty}




The classification of (compact) Riemannian manifolds $(\M^n, g)$ with positive or non-negative (sectional) curvature is a notoriously difficult problem.
One of the few successes in this quest occurs when one considers such manifolds equipped with an effective action by a suitably large group $\G$ of isometries.  The ambiguity of the term ``suitably large'' allows various classification results to be achieved (see, for example, \cite{BB, GS94, GS, GWZ08, Ve, Wa, Wi1, Wi2} and the surveys \cite{G, Wi3}).

There are, in fact, two parts to the classification program.  First is the \emph{topological classification}, the goal of which is to determine, up to diffeomorphism, all possible positively or non-negatively curved manifolds on which $\G$ can act. The second part is the \emph{equivariant classification}, where the goal is to determine all possible actions of $\G$ on a given positively or non-negatively curved manifold up to equivariant diffeomorphism.

This approach to the classification problem is inspired by the work of Hsiang and Kleiner \cite{HK}.  They showed that a simply connected, four-dimensional manifold $(\M^4, g)$ with positive curvature admitting an effective, isometric circle action must be homeomorphic to either $\sph^4$ or to $\cp^2$.  If $(\M^4, g)$ is assumed to have only non-negative curvature then, by Kleiner \cite{Kl} and Searle and Yang \cite{SY}, $\M^4$ must be homeomorphic to one of $\sph^4$, $\cp^2$, $\cp^2 \# \pm \cp^2$ or $\sph^2 \x \sph^2$.  In both situations, homeomorphism is improved to diffeomorphism by appealing to work of Fintushel \cite{F1,F2}, Pao \cite{Pa} and Perelman's proof of the Poincar\'e conjecture \cite{P1,P2}.

The existence of an effective, isometric circle action is equivalent to the rank of the isometry group $\Iso(\M^n, g)$ being positive.  The success in dimension four suggests that it may be beneficial to consider the topological classification when ``largeness'' of our group $\G$ of isometries refers to the \emph{symmetry rank} of $(\M^n,g)$, defined as the rank of $\Iso(\M^n, g)$ and denoted $\symrank(\M^n, g)$.  Indeed, Grove and Searle \cite{GS94} showed that if $(\M^n, g)$ is positively curved, then $\symrank(\M^n, g) \leq \lfloor \tfrac{n+1}{2} \rfloor$. Moreover, if the symmetry rank is maximal, namely $\symrank(\M^n, g) = \lfloor \tfrac{n+1}{2} \rfloor$, then $\M^n$ is diffeomorphic to a sphere, a real or complex projective space, or a lens space.

In the non-negative curvature setting, when $(\M^n, g)$ is simply connected and $n \leq 9$, it is known that $\symrank(\M^n, g) \leq \lfloor \tfrac{2n}{3} \rfloor$.  If equality holds, then $\M^n$ is diffeomorphic to $\sph^4$, $\cp^2$, $\cp^2 \# \pm \cp^2$ or $\sph^2 \x \sph^2$ for $n=4$; to $\sph^3 \x \sph^2$, $\sph^3\tilde{\x}\sph^2$ (the non-trivial $\sph^3$-bundle over $\sph^2$) or $\sph^5$ for $n=5$; and to $\sph^3 \x \sph^3$ for $n=6$ (cf. \cite{GGS10}).

This article is concerned with the equivariant classification in low dimensions.  Before the statement of the main result, it is necessary to recall that a \emph{biquotient} is a quotient of a Lie group $\G$ by the two-sided, free action of a subgroup $\U \subset \G \x \G$.  If $\G$ is equipped with a bi-invariant metric, then the action of $\U$ is by isometries and the quotient $G \bq \U$ equipped with the induced metric (of non-negative curvature) is called a \emph{normal biquotient}.

\begin{main}
\label{Thm A}
Let $\M^n$, $n \in \{4,5,6\}$, be an $n$-manifold which admits a Riemannian metric with non-negative curvature and maximal symmetry rank.  Then every smooth, effective action on $\M^n$ by a torus $\T^{n-2}$ is equivariantly diffeomorphic to an effective, isometric action on a normal biquotient.
\end{main}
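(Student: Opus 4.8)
The plan is to reduce the statement to a finite combinatorial classification of the possible actions and then to exhibit a normal biquotient model realizing each one. Since $n-2 = \lfloor \tfrac{2n}{3}\rfloor$ for $n \in \{4,5,6\}$, any smooth effective $\T^{n-2}$-action on $\M^n$ has cohomogeneity two and the acting torus is of maximal rank. By the diffeomorphism classification recalled above (cf.\ \cite{GGS10}), $\M^n$ is one of the finitely many manifolds listed there, and I would treat each diffeomorphism type in turn. The orbit space $\M^n/\T^{n-2}$ is a compact $2$-dimensional manifold with boundary and corners; a standard argument using the simple connectivity of $\M^n$ identifies it with a disk $D^2$, whose interior parametrizes principal orbits, whose open edges parametrize orbits with circle isotropy, and whose corners parametrize orbits with rank-two isotropy (in dimension four the corners are fixed points, and their number equals $\chi(\M^4)$).

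Second, I would appeal to the equivariant classification of such cohomogeneity-two torus actions. The equivariant diffeomorphism type is encoded by the \emph{weighted orbit space}: the disk $D^2$ together with the assignment to each boundary edge of a primitive vector in the integer lattice $\Z^{n-2} = \pi_1(\T^{n-2})$, namely the isotropy circle along that edge. Two conditions constrain the data: smoothness (local standardness) requires that the two vectors attached to the edges meeting at a corner span a rank-two direct summand of $\Z^{n-2}$, while the identity $\pi_1(\M^n) = \Z^{n-2}/\langle\text{edge vectors}\rangle$ shows that simple connectivity is equivalent to the edge vectors generating the full lattice. In dimension four this is exactly the Orlik--Raymond classification; in dimensions five and six I would use the analogous results of Oh and the theory of cohomogeneity-two torus actions. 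This reduces the problem, for each $\M^n$, to a finite list of weighted disks, up to automorphisms of $\T^{n-2}$ and symmetries of the disk.

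Third, I would realize each weighted disk by a normal biquotient carrying an isometric $\T^{n-2}$-action. The basic models are the linear ones: coordinatewise rotation of the round spheres $\sph^4 \subset \C^2 \x \R$ and $\sph^5 \subset \C^3$, and of the products $\sph^2\x\sph^2$, $\sph^3\x\sph^2$ and $\sph^3\x\sph^3$. All of these are homogeneous, hence normal biquotients, and their quotients are respectively a bigon, a triangle, and products of intervals. The twisted diffeomorphism types --- the nontrivial bundle $\sph^3\tilde{\x}\sph^2$ and $\cp^2 \# \overline{\cp^2}$ --- I would present as genuine biquotients $\G \bq \U$ with $\G$ a product of copies of $\SU(2)$ and $\SU(3)$; here a suitable torus of $\G \x \G$ containing $\U$ descends to an isometric $\T^{n-2}$-action of cohomogeneity two on $\G \bq \U$, and for each such model one computes the residual weighted orbit space and matches it against the classification. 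Since a bi-invariant metric on $\G$ induces a non-negatively curved metric on $\G \bq \U$ for which this torus acts by isometries, a successful match produces exactly the required isometric action on a normal biquotient.

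The hard part is the third step, and within it two issues stand out. The first is bookkeeping: one must produce a biquotient realizing \emph{every} weighted disk permitted by the combinatorial classification and verify equivariance by matching isotropy lattice data, rather than merely recovering the underlying diffeomorphism type. The second, and more serious, is that some diffeomorphism types on the list are not normal biquotients at all --- most notably $\cp^2 \# \cp^2$ in dimension four. For these one must show either that the manifold carries no non-negatively curved metric of maximal symmetry rank, so that it never satisfies the hypothesis of the theorem, or that the combinatorial types forcing such a manifold cannot occur. Settling this biquotient-realizability question is what gives the theorem its content.
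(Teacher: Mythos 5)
Your overall strategy---reduce to the finite list of diffeomorphism types coming from the maximal-symmetry-rank classification of \cite{GGS10}, encode each cohomogeneity-two torus action by its legally weighted orbit disk via Orlik--Raymond and Oh, and then realize every admissible weighted disk by an isometric torus action on a normal biquotient---is the route the paper takes: the cases $\sph^4$, $\cp^2$, $\sph^5$ and $\sph^3\x\sph^3$ are dispatched by the uniqueness/linearity results of \cite{ORa70}, \cite{Oh83} and \cite{Oh82}, and the remaining manifolds are obtained as quotients of $(\sph^3\x\sph^3,\met_0)$ by free circle or torus subactions of the effective isometric $\T^4$ action (\ref{Eq:effT4}), with the complementary torus supplying the cohomogeneity-two action whose isotropy data (Lemmas \ref{lem:T2isotropy} and \ref{lem:T3isotropy}) is matched against the complete lists of weighted disks from Sections \ref{S:dim4} and \ref{S:dim5}. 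No $\SU(3)$ factors are needed; $\sph^3\x\sph^3$ suffices for everything.

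However, your final paragraph contains a genuine error that would block the proof for one of the cases. You assert that $\cp^2\#\cp^2$ is not a normal biquotient and propose to escape by showing either that it admits no non-negatively curved metric of maximal symmetry rank or that its combinatorial type cannot occur. Both escape routes are closed: $\cp^2\#\cp^2$ appears on the list of \cite{GGS10} precisely because it does admit such a metric (Cheeger \cite{Ch}), and its legally weighted orbit disk does occur (Figure 3, with weights $(1,0),(0,1),(1,1),(2,1)$). What you have misremembered is that $\cp^2\#\cp^2$ fails to be \emph{homogeneous}; it is nevertheless a normal biquotient, namely $(\sph^3\x\sph^3,\met_0)\bq\T^2$ for the free two-sided torus action (\ref{Eq:unifreeT2}) (cf. \cite{Ch}, \cite{To}, \cite{DV}), and the residual $\T^2_{uv}$ acts on it isometrically and effectively with cohomogeneity two. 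Since the analysis in Section \ref{S:dim4} shows there is a unique smooth, effective $\T^2$ action on $\cp^2\#\cp^2$ up to equivariant diffeomorphism, this single biquotient model already finishes that case. Without this correction your third step cannot be completed, since the manifold genuinely satisfies the hypotheses of the theorem and must be handled.
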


In light of the topological classification discussed earlier, we make the following remarks.  Any smooth, effective $\T^4$ action on $\sph^3\x\sph^3$ is equivariantly diffeomorphic to the standard action of $\T^4$ on the Lie group $\sph^3\x\sph^3$ equipped with a bi-invariant metric (cf. \cite{Oh82}).  From \cite{Oh83} it follows that any smooth, effective $\T^3$ action on $\sph^5$ is equivariantly diffeomorphic to a linear action (on a normal homogeneous space).  Similarly, by the classification result of Orlik and Raymond \cite{ORa70}, any smooth, effective $\T^2$ action on $\sph^4$ or $\cp^2$ is equivariantly diffeomorphic to a linear action (on a normal homogeneous space).

In order to establish Theorem \ref{Thm A}, it therefore remains only to consider the actions of maximal rank tori on the manifolds $\cp^2 \# \pm \cp^2$, $\sph^2 \x \sph^2$, $\sph^3\x \sph^2$ and $\sph^3\tilde{\x}\sph^2$.  Each of these manifolds can be described as a normal biquotient of $\sph^3\x \sph^3$ by the free action of either a two-torus or circle (cf. \cite{Ch}, \cite{To}, \cite{Pav04}, \cite{DV}).  The standard effective, isometric action of $\T^4$ on the Lie group $\sph^3\x\sph^3$ induces a maximal rank, effective, isometric torus action on each of these biquotients.  An examination of all possible induced actions on these biquotients then shows that every equivariant diffeomorphism type appearing in the classifications of Orlik and Raymond \cite{ORa70} and Oh \cite{Oh83} is achieved.

Grove and Wilking \cite{GW} have recently shown that an effective, isometric circle action on $\sph^4$ or $\cp^2$ is equivariantly diffeomorphic to a linear action.  Furthermore, any effective, isometric circle action on a simply connected $4$-manifold with non-negative curvature extends to a smooth, effective $\T^2$ action (cf. \cite{GG}, \cite{GW}).  This fact, in combination with Theorem \ref{Thm A}, can be applied to $\cp^2 \# \pm \cp^2$ and $\sph^2 \x \sph^2$ to draw the following conclusion.

\begin{corm}
\label{Cor B}
An effective, isometric circle action on a non-negatively curved, simply connected $4$-manifold is equivariantly diffeomorphic to an effective, isometric action on a normal biquotient.
\end{corm}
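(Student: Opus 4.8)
The plan is to divide into cases according to the topological classification recalled in the introduction. By Kleiner \cite{Kl} and Searle--Yang \cite{SY}, a simply connected, non-negatively curved $4$-manifold is diffeomorphic to one of $\sph^4$, $\cp^2$, $\cp^2 \# \pm \cp^2$ or $\sph^2 \x \sph^2$. The cases $\sph^4$ and $\cp^2$ can be dealt with directly by the theorem of Grove and Wilking \cite{GW}, while the remaining two will be reduced to Theorem \ref{Thm A}.

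For $\M^4$ diffeomorphic to $\sph^4$ or $\cp^2$, the result of Grove and Wilking \cite{GW} asserts that the given effective, isometric circle action is equivariantly diffeomorphic to a linear action. Since a linear action on $\sph^4$ or $\cp^2$ is an isometric action on a normal homogeneous space, and a normal homogeneous space is a particular instance of a normal biquotient, the conclusion follows at once in these two cases.

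Suppose next that $\M^4$ is diffeomorphic to $\cp^2 \# \pm \cp^2$ or $\sph^2 \x \sph^2$. Here I would first invoke the extension result (cf.\ \cite{GG}, \cite{GW}), whereby the effective, isometric circle action extends to a smooth, effective $\T^2$ action. Since each of these manifolds carries a metric of non-negative curvature and maximal symmetry rank (equal to $2$ when $n=4$), as recorded in the discussion following Theorem \ref{Thm A}, that theorem applies to the extended $\T^2$ action and produces an equivariant diffeomorphism $\phi \colon \M^4 \to B$ onto a normal biquotient $B$ carrying an effective, isometric $\T^2$ action, together with an identification of the two acting tori.

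It then remains to restrict to the original circle. As the circle action on $\M^4$ is the restriction of the $\T^2$ action to a subcircle $\sph^1 \subset \T^2$, the map $\phi$ intertwines it with the restriction of the isometric $\T^2$ action on $B$ to the corresponding subcircle. This restricted action on $B$ is isometric, being the restriction of an isometric action, and effective, since $\phi$ carries an effective action to an effective one; hence the circle action is equivariantly diffeomorphic to an effective, isometric circle action on the normal biquotient $B$. The substance of the argument is thus carried entirely by Theorem \ref{Thm A} and the two cited results; the corollary is their assembly, and the only point demanding care is the verification that restriction to a subcircle preserves both effectiveness and the isometric property.
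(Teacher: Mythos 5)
Your proof is correct and follows essentially the same route as the paper: the paper likewise handles $\sph^4$ and $\cp^2$ via the Grove--Wilking linearization result, and handles $\cp^2 \# \pm \cp^2$ and $\sph^2 \x \sph^2$ by extending the circle action to a smooth, effective $\T^2$ action (citing \cite{GG}, \cite{GW}) and then applying Theorem~\ref{Thm A}. Your added verification that restriction to a subcircle preserves effectiveness and the isometric property is a detail the paper leaves implicit, but it is the same argument.
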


It is worthwhile to remark that there exist smooth, effective (non-isometric) circle actions on $\sph^4$ which do not extend to a smooth $\T^2$ action (cf. \cite{Pa}). 

The paper is divided as follows. In Section \ref{S:Prelim} definitions and notation necessary for the rest of the paper are gathered.  In Section \ref{S:cohom2tori} some general facts about smooth, cohomogeneity-two torus actions are recalled, while in Sections \ref{S:dim4} and \ref{S:dim5} all possible orbit spaces of smoooth, cohomogeneity-two torus actions on the manifolds $\cp^2 \# \pm \cp^2$, $\sph^2 \x \sph^2$, $\sph^3 \x \sph^2$ and $\sph^3 \tilde{ \x} \sph^2$ are detailed.  In Section \ref{S:Biquotients} descriptions of these manifolds as biquotients are given and in Sections \ref{S:4dim} and \ref{S:5dim} it is shown that isometric, cohomogeneity-two actions on these biquotients recover all of the orbit spaces described in Sections \ref{S:dim4} and \ref{S:dim5}, thus proving Theorem \ref{Thm A}.  Finally, in Section \ref{S:P_bundles} we make some observations about principal circle bundles over the $4$-manifolds $\cp^2 \# \pm \cp^2$ and $\sph^2 \x \sph^2$.


\begin{ack} The first named author thanks B.\ Wilking and K.\ Grove for useful conversations.  The second named author wishes to thank J.\ DeVito for several interesting and helpful discussions.
\end{ack}


\section{Basic definitions and notation}
\label{S:Prelim}

Let $\G\x \M \rightarrow \M$,  $\ m \mapsto g \star m$, be a smooth action of a compact Lie group $\G$ on a smooth manifold $\M$.  We will denote the orbit space $\M/\G$ of the action  by $\M^*$. The \emph{cohomogeneity} of the action is the dimension of the orbit space $M^*$.  The orbit $\G \star p$ through a point $p \in \M$ is diffeomorphic to the quotient $\G/ \G_p$, where $\G_p := \{g \in \G \mid g \star p = p \}$ is the isotropy subgroup of $\G$ at $p$.  If $\G_p$ acts trivially on the normal space to the orbit at $p$, then $\G / \G_p$ is called a \emph{principal orbit}.  The set $P$ of principal orbits is open and dense in $M$.  In fact, the isotropy groups of principal orbits are all conjugate in $\G$, hence each principal orbit has the same dimension.

If $\G/\G_p$ has the same dimension as a principal orbit, but $\G_p$ acts non-trivially on the normal space at $p$, then $\G / \G_p$ is called an \emph{exceptional orbit}.  An orbit that is neither principal nor exceptional is called a \emph{singular orbit}.  The set of exceptional orbits will be denoted by $E$ and the set of singular orbits by $Q$.  When $\G_p = \G$, $p$ is called a \emph{fixed point} of the action.  The set of fixed points of the action will be denoted by $\fix(\M,\G)$. 

Given a subset $X \subset \M$, we will denote its projection under the orbit map $\pi: \M \rightarrow \M^*$ by $X^*$. Given a subset $X^*\subset \M^*$, we will let $X=\pi^{-1}(X^*)$ be its pre-image under $\pi$. Recall that the \emph{ineffective kernel} of the action  is $\Ker := \{g \in \G \ | \ g \star m = m, \ \forall \ m \in \M\}$.  The action is \emph{effective} if the ineffective kernel is trivial.  The group $\widetilde \G := \G/\Ker$ will always act effectively.  We say that the action of $\G$ is \emph{free} if $g \star m = m$ for some $m \in \M$ implies that $g = e$, the identity in $\G$.  In this case the orbit space $\M / \G$ is a manifold.  We may expand our definition of freeness to allow an ineffective kernel, namely $g \star m = m$ for some $m \in \M$ implies that $g \star m = m$ for all $m \in \M$.  In this case, the orbit space $\M/\G$ is again a manifold, diffeomorphic to $\M/ \widetilde \G$, where $\widetilde \G = \G/\Ker$ acts freely and effectively on $\M$.  If, in addition, there is a metric on $\M$ such that the action of $\G$ is by isometries, then there is an induced metric on $\M / \G$, given by the distance between orbits in $\M$.

Let $a_1, \dots, a_n \in \Z$ be relatively prime integers.  Define the circle subgroup of slope $\underline a = (a_1, \dots, a_n) \in \Z^n$ in $\T^n$ via $\G(\underline a) = \G(a_1, \dots, a_n) := \{z^{a_1}, \dots, z^{a_n}) \mid z \in \C, \ |z|^2 = 1 \}$.

By the \emph{determinant of $n$ circle subgroups} $\G(\underline a_{1}), \dots, \G(\underline a_{n})$ of $\T^n$ we mean the determinant of the $(n\x n)$-matrix
\[
\bpm
	a_{11} &  a_{12} & \dots & a_{1n} \\
	a_{21} &  a_{22} & \dots & a_{2n} \\
    \vdots & \vdots & & \vdots \\
	a_{31} &  a_{n2} & \dots & a_{nn}
	\epm
\]
where $\underline a_i = (a_{i1}, \dots, a_{in}) \in \Z^n$.

We recall that  two subgroups $\G(\underline a_1)$ and $\G(\underline a_2)$ of $\T^n$ have trivial intersection if and only if there exist $\G(\underline a_3), \dots, \G(\underline a_n) \subset \T^n$ such that the determinant of these $n$ circle subgroups is $\pm 1$.  A collection of $n$ circle subgroups $\G(\underline a_i)$, $i=1, \dots, n$, span $\T^n$ provided their determinant is non-zero, i.e. the vectors $\underline a_1, \dots, \underline a_n$ span $\R^n$.  They are generators of $\T^n$, that is, $\T^n=\G(\underline a_1) \x \dots \x \G(\underline a_n)$, if and only if their determinant is $\pm 1$.

Consider now a closed subgroup $\U \subset \G \x \G$ acting on a compact Lie group $\G$ via
$$
(u_1, u_2) \star g = u_1 g u_2^{-1}, \ g \in \G, (u_1, u_2) \in \U.
$$
This action is free if and only if, for non-trivial $(u_1, u_2) \in \U$, $u_1$ is never conjugate to $u_2$ in $\G$.  The resulting quotient manifold, denoted $\G \bq \U$, is called a \emph{biquotient}.  As discussed above, we may extend the definition of biquotient to allow actions which act freely up to an ineffective kernel  (cf. \cite{Es}).  In particular, if $\G$ is equipped with a bi-invariant metric $\met_0$, then $\U$ acts by isometries and $(\G, \met_0) \bq \U$ is called a \emph{normal biquotient}.


\section{Cohomogeneity-two torus actions}
\label{S:cohom2tori}

Let $\M^{n+2}$  be a smooth, compact, simply connected $(n+2)$-manifold on which a compact, connected Lie group $\G$ acts smoothly, effectively and with cohomogeneity two. It is well-known (see, for example, \cite[Chapter IV]{Br}) that in the presence of singular orbits, the orbit space $\M^*$ of the action is homeomorphic to a $2$-disk $D^2$ with boundary $Q^*$, the projection of  the singular orbits, while the interior points of $\M^*$ correspond to  principal orbits. When $\G=\T^n$, $n\geq 2$, the orbit space structure  was analyzed in \cite{ORa70, KMP} (see also \cite{Oh83}). In this case the only possible non-trivial isotropy groups are $\sph^1$ and $\T^2$. The boundary circle, $Q^*$,  is a union of arcs, and the interior of each arc corresponds to orbits with isotropy $\sph^1$, while the endpoints of each arc correspond to orbits with isotropy $\T^2$ (see Figure 1). Moreover, there must be at least $n$ orbits with isotropy $\T^2$.

\begin{center}
  \begin{tikzpicture}[scale=2]
    \path[coordinate] (0.5,0.866)  coordinate(A)
                (0,1) coordinate(B)
                (-0.5,0.866) coordinate(C)
                (-0.866,0.5) coordinate(D)
                (-1,0) coordinate(E)
                (-0.866,-0.5) coordinate(F)
                (-0.5,-0.866) coordinate(G)
                (0,-1) coordinate(H)
                (0.5,-0.866) coordinate(I);

    \draw (A) -- (B) -- (C) -- (D) -- (E) -- (F) -- (G) -- (H) -- (I);
    \draw[loosely dotted] (0.5,-0.866) arc (-60:60:1);

    \draw[color = black] (1.1, 0.8) node{$\M^*$};

    \draw[color = black] (2, -0.2) node{trivial isotropy};
    \draw[stealth-] (0,0) .. controls +(0.5,-0.5) and +(-0.5,0) .. (1.4,-0.2);

    \draw[color = black] (-2,0.8) node{$\T^2$ isotropy};
    \draw[-stealth] (-2,0.7) .. controls +(0.5,-0.5) and +(-0.5,0.2) .. (-1.05,0);
    \draw[-stealth] (-2,0.7) .. controls +(0.5,-0.5) and +(-0.5,0) .. (-0.916,0.5);

    \draw[color = black] (-2, -0.8) node{$\sph^1$ isotropy};
    \draw[-stealth] (-2,-0.7) .. controls +(0.5,0.5) and +(-0.5,-0.5) .. (-0.7071,-0.7071);
    \draw[-stealth] (-2,-0.7) .. controls +(0.5,0.5) and +(-0.5,-0.1) .. (-0.9659,-0.2588);

    \filldraw[black] (A) circle (1pt)
                     (B) circle (1pt)
                     (C) circle (1pt)
                     (D) circle (1pt)
                     (E) circle (1pt)
                     (F) circle (1pt)
                     (G) circle (1pt)
                     (H) circle (1pt)
                     (I) circle (1pt);

    \draw[color = black] (0, -1.3) node{\underline{Figure 1}: Orbit space of a $\T^n$ action on $\M^{n+2}$};
  \end{tikzpicture}
  \end{center}

Suppose, on the other hand, that $n \geq 2$ and $\M^*$ is a two-dimensional manifold homeomorphic to $D^2$.  Partition the boundary of $\M^*$ into $N$ (ordered) arcs ($N \geq n$) and to each arc assign an $n$-tuple $\underline x_i = (x_{i1}, \dots x_{in}) \in \Z^n$, $i = 1, \dots, N$, where $\gcd(x_{i1}, \dots, x_{in}) = 1$.  Consider each of these $n$-tuples $\underline x_i$ to be the slope of a circle $\G(\underline x_i)$ in $\T^n$.  We say that $\M^*$ is \emph{legally weighted} with weights $\{ \underline x_1, \dots, \underline x_N \}$ if, for any two adjacent slope vectors $\underline x_i$ and $\underline x_{i+1}$ ($i$ cyclic index), there exist $n-2$ other $n$-tuples in $\underline v_1, \dots, \underline v_{n-2} \in \Z^n$ such that the $(n \x n)$-matrix with rows $\underline x_i, \underline x_{i+1}, \underline v_1, \dots, \underline v_{n-2}$ has determinant $\pm 1$.  This is equivalent to saying that the circles $\G(\underline x_i)$ and $\G(\underline x_{i+1})$ have trivial intersection, that is, there exist $n-2$ other circles in $\T^n$ such that together the circles generate (the homology of) $\T^n$.  By \cite{Oh83}, for any legally weighted, orientable $2$-manifold $\M^*$ there is an $(n + 2)$-dimensional manifold $\M$ with orbit space $\M^*$ under an effective $\T^n$ action.

In fact, following similar techniques to those used in \cite{ORa70}, Oh \cite{Oh83} showed that two $(n+2)$-manifolds $\M$ and $\M'$ with smooth, effective $\T^n$ actions are equivariantly diffeomorphic if and only if there is a weight-preserving diffeomorphism between their orbit spaces $\M^*$ and $(\M')^*$.

Therefore, given an $(n+2)$-manifold $\M$ equipped with a smooth, effective $\T^n$ action, we may represent $\M$ uniquely in terms of its weighted orbit space:
\[
\M^{n+2} = \{ \underline x_1, \dots, \underline x_N \}, \ \ N \geq n \geq 2,
\]
where $\underline x_1, \dots, \underline x_N \in \Z^n$ and $\gcd(\underline x_i) = 1$, for $i = 1, \dots, N$.

Moreover, \cite{Oh83} has used the (legally) weighted orbit space $\M^*$ to give information about the fundamental group of the manifold $\M$.  Indeed, if $\M^*$ is a $2$-dimensional disk and there are no exceptional orbits (i.e. no finite isotropy groups), and if $\underline x_i \in \Z^n$ denotes the slope of the $i^{\rm th}$ $\sph^1$ isotropy group $\G(\underline x_i) \subset \T^n$, $i = 1, \dots, N$, then
\beq
\label{Eq:fundgp}
\pi_1 (\M) \subset \Z^n/\<\underline x_1, \dots, \underline x_N\>.
\eeq
In particular, if the $\sph^1$ isotropy groups span $\T^n$ (i.e. some $n$ of the $\underline x_i$ give an $(n \x n)$-matrix with nonzero determinant), then $\pi_1 (\M)$ is finite, and if the $\sph^1$ isotropy groups generate $\T^n$ (i.e. some $n$ of the $\underline x_i$ give an $(n \x n)$-matrix with determinant $\pm 1$), then $\M$ is simply connected.  Note that, if the $\sph^1$ isotropy groups do not span $\T^n$, then $\M$ is diffeomorphic to a product $\M' \x \sph^1$, where $\M'$ is an $(n+1)$-manifold with an effective $\T^{n-1}$ action (cf. \cite{Oh83}).  Hence we will always assume that the $\sph^1$ isotropy groups span $\T^n$.

That the $\sph^1$ isotropy groups give trivial elements of the fundamental group can be explained as follows.  Let $p \in \M$ such that $\T^n \star p$ is a principal orbit and let $q \in \M$ such that $\T^n \star q$ is a singular orbit with isotropy group $\G(\underline x_i)$ of slope $\underline x_i$.  Take a geodesic $\gamma_i (t)$ from $p$ to $q$.  Then $\G(\underline x_i) \star \gamma_i(t)$ is a disk, since $\G(\underline x_i) \star q = q$, and hence $\G(\underline x_i) \star p$ is homotopically trivial.


\section{$\T^2$ actions on $4$-manifolds}
\label{S:dim4}

As discussed in Section \ref{S:cohom2tori}, the orbit space $\M^*$ of a smooth, effective $\T^2$ action on a smooth, compact, simply connected $4$-manifold $\M$ is an oriented $2$-disk whose interior consists of principal orbits and boundary contains $k \geq 2$ isolated points corresponding to orbits with isotropy $\T^2$, i.e. fixed points.  Arcs in the boundary between these points correspond to orbits with circle isotropy groups $G(m_i,n_i)$, $1 \leq i \leq k$, where $\gcd(m_i, n_i) = 1$.

Represent $\M$ in terms of its weighted orbit space, that is
\[
\M=\{\underline x_1 = (m_1,n_1),\ldots, \underline x_k = (m_k,n_k) \}.
\]
In order for an orbit space $\M^*$ to be legally weighted, it is necessary to have
\beq
\label{Eq:legal4dim}
\det \bpm m_{k} & n_k \\ m_1 & n_{1}\epm =\varepsilon_1=\pm 1 \ \ \textrm{and} \ \
\det \bpm m_{i-1} & n_{i-1}\\ m_{i} & n_{i}\epm = \varepsilon_i=\pm 1, \ \ \ i=2,3,\ldots,k.
\eeq
Assume without loss of generality that $\M^*$ is oriented from $\underline x_i = (m_i,n_i)$ to $\underline x_{i+1} = (m_{i+1},n_{i+1})$ and recall that actions on $\M$ are in one-to-one correspondence with weights in the orbit space.

Orlik and Raymond \cite{ORa70} have shown that $\M$ is equivariantly diffeomorphic to a connected sum of $\sph^4$, $\pm\cp^2$ and $\sph^2\x \sph^2$.  However, as we are only interested in $4$-manifolds which admit a Riemannian metric of non-negative curvature and maximal symmetry rank, in particular $\cp^2 \# \pm \cp^2$ and $\sph^2\x \sph^2$, we may restrict our attention to the case where there are exactly four isolated fixed points on the boundary.  This follows from the fact that $\chi(\M) = \chi(\fix(\M,\T^2))$.

Without loss of generality we may reparametrize our $\T^2$ action and assume that the weighted orbit space is of the form shown in Figure 2.


\begin{center}
  \begin{tikzpicture}[scale=1.5]
    \path[coordinate] (1,1)  coordinate(A)
                (-1,1) coordinate(B)
                (-1,-1) coordinate(C)
                (1,-1) coordinate(D);

    \draw (A) -- (B) -- (C) -- (D) -- (A);
    \draw[color = black] (0,1.3) node{$\underline x_1 = (1,0)$};
    \draw[color = black] (-1.7, 0) node{$\underline x_2 = (0,1)$};
    \draw[color = black] (0,-1.3) node{$\underline x_3 = (a,b)$};
    \draw[color = black] (1.7, 0) node{$\underline x_4 = (c,d)$};
    \filldraw[black] (A) circle (1.5pt)
                     (B) circle (1.5pt)
                     (C) circle (1.5pt)
                     (D) circle (1.5pt);
    \draw[color = black] (0, -1.8) node{\underline{Figure 2}: Weighted orbit space for a $\T^2$ action on $\M^4$};
  \end{tikzpicture}
  \end{center}
Since the orbit space is assumed to be legally weighted, it follows from (\ref{Eq:legal4dim}) that $\vep_2 = 1$, $a = -\vep_3$, $d = -\vep_1$ and hence $\vep_1 \vep_3 - bc = \vep_4 = \vep_2 \vep_4$.  Therefore $ad = - 2 \vep_4$ (whenever $\vep_1 \vep_3 = - \vep_2 \vep_4 = -\vep_4$) or $0$ (whenever $\vep_1 \vep_3 = \vep_2 \vep_4 = \vep_4$), from which it follows that $(b,c) = \pm (1,-2 \vep_4)$ or $\pm (- 2 \vep_4,1)$ or $(0, k)$ or $(k, 0)$ for some $k \in \Z$.  From \cite[Section 5]{ORa70}, in the first case $\M$ is diffeomorphic to $\cp^2 \# \cp^2$, while in the latter two cases $\M$ is diffeomorphic to $\cp^2 \# - \cp^2$ or $\sph^2\x \sph^2$, depending on the parity of $k$.  Thus, up to reparametrization and reordering, all possible legally weighted orbit spaces are given in Figure 3 below.

\begin{center}
  \begin{tikzpicture}[scale=1.4]
    \path[coordinate] (-1.5,1)  coordinate(A)
                (-3.5,1) coordinate(B)
                (-3.5,-1) coordinate(C)
                (-1.5,-1) coordinate(D);

    \draw (A) -- (B) -- (C) -- (D) -- (A);
    \draw[color = black] (-2.5,1.3) node{$\underline x_1 = (1,0)$};
    \draw[color = black] (-4.2, 0) node{$\underline x_2 = (0,1)$};
    \draw[color = black] (-2.5,-1.3) node{$\underline x_3 = (1,0)$};
    \draw[color = black] (-0.8, 0) node{$\underline x_4 = (k,1)$};
    \filldraw[black] (A) circle (1.5pt)
                     (B) circle (1.5pt)
                     (C) circle (1.5pt)
                     (D) circle (1.5pt);

    \path[coordinate] (3.5,1)  coordinate(W)
                (1.5,1) coordinate(X)
                (1.5,-1) coordinate(Y)
                (3.5,-1) coordinate(Z);

    \draw (W) -- (X) -- (Y) -- (Z) -- (W);
    \draw[color = black] (2.5,1.3) node{$\underline x_1 = (1,0)$};
    \draw[color = black] (0.8, 0) node{$\underline x_2 = (0,1)$};
    \draw[color = black] (2.5,-1.3) node{$\underline x_3 = (1,1)$};
    \draw[color = black] (4.2, 0) node{$\underline x_4 = (2,1)$};
    \filldraw[black] (W) circle (1.5pt)
                     (X) circle (1.5pt)
                     (Y) circle (1.5pt)
                     (Z) circle (1.5pt);

    \draw[color = black] (-2.5, -1.7) node{$\sph^2 \x \sph^2$ (if $k$ even) and };
    \draw[color = black] (-2.5, -2) node{$\cp^2 \# -\cp^2$ (if $k$ odd)};
    \draw[color = black] (2.5, -1.7) node{$\cp^2 \# \cp^2$};
    \draw[color = black] (0, -2.5) node{\underline{Figure 3}: Possible legally weighted orbit spaces for $\cp^2 \# \pm \cp^2$ and $\sph^2 \x \sph^2$};
  \end{tikzpicture}
  \end{center}


\section{$\T^3$ actions on $5$-manifolds}
\label{S:dim5}

Let $\M$ be a smooth, compact, simply connected $5$-manifold with a smooth, effective $\T^3$ action.  By the discussion in Section \ref{S:cohom2tori}, the orbit space $\M^*$ of the action is an oriented $2$-disk whose interior consists of principal orbits and its boundary contains $k\geq 3$ isolated points corresponding to orbits with isotropy $\T^2$.  As before, arcs in the boundary between these points correspond to orbits with circle isotropy groups $G(\ell_i, m_i, n_i)$, $2 \leq i \leq k$, where $\gcd(\ell_i, m_i, n_i) = 1$, and $\M$ may be represented in terms of its weighted orbit space, i.e.
\[
\M=\{\underline{x}_1 = (\ell_1, m_1, n_1),\ldots,\underline{x}_k = (\ell_k, m_k, n_k)\}.
\]

Oh \cite{Oh83} has classified such manifolds up to equivariant diffeomorphism.  Indeed, if $k = 3$, $\M$ (together with its $\T^3$ action) is equivariantly diffeomorphic to $\sph^5$ equipped with a linear action, while, for $k \geq 4$, $\M$ is equivariantly diffeomorphic to either $\#(k-3)(\sph^3 \x \sph^2)$ or $(\sph^3 \tilde{\x} \sph^2)\#(k-4)(\sph^3 \x \sph^2)$, depending respectively on whether the second Stiefel-Whitney class $w_2 (\M)$ is trivial or not.

As manifolds which admit a Riemannian metric with non-negative curvature and maximal symmetry rank are the focus of our attention, by \cite{GGS10} we may therefore assume that $k = 4$, namely that $\M$ is one of $\sph^3 \x \sph^2$ or $\sph^3 \tilde{\x} \sph^2$.

By a suitable reparametrization of the $\T^3$ action, if necessary, we may assume that the orbit space of these manifolds looks like the one in Figure 4 below.


\begin{center}
  \begin{tikzpicture}[scale=1.5]
    \path[coordinate] (1,1)  coordinate(A)
                (-1,1) coordinate(B)
                (-1,-1) coordinate(C)
                (1,-1) coordinate(D);

    \draw (A) -- (B) -- (C) -- (D) -- (A);
    \draw[color = black] (0,1.3) node{$\underline x_1 = (1,0,0)$};
    \draw[color = black] (-1.8, 0) node{$\underline x_2 = (0,1,0)$};
    \draw[color = black] (0,-1.3) node{$\underline x_3 = (p,q,r)$};
    \draw[color = black] (1.8, 0) node{$\underline x_4 = (x,y,z)$};
    \filldraw[black] (A) circle (1.5pt)
                     (B) circle (1.5pt)
                     (C) circle (1.5pt)
                     (D) circle (1.5pt);

    \draw[color = black] (0, -1.8) node{\underline{Figure 4}: Weighted orbit space for a $\T^3$ action on $\M^5$};
  \end{tikzpicture}
  \end{center}

Notice that $\underline x_1 = (1,0,0)$, $\underline x_2 = (0,1,0)$, $\underline x_3 = (p,q,r)$ and $\underline x_4 = (x,y,z)$ generate $\<(1,0,0), (0,1,0), (0, 0, \gcd(r,z)) \> \subset \Z^3$.  Therefore, from (\ref{Eq:fundgp}) we deduce that $\pi_1 (\M) \subset \Z_{\gcd(r,z)}$.  In fact, as the following proposition shows, we can do better.


\begin{prop}
\label{Prop:fundgp}
If $\T^3$ acts smoothly and effectively on a smooth, compact $5$-manifold $\M$ with (legally) weighted orbit space as in Figure 4, then $\pi_1 (\M) = \Z_{\gcd(r,z)}$.  In particular, if $\M$ is simply connected, then $\gcd(r,z) = 1$.
\end{prop}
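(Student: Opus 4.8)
The plan is to exhibit the universal cover of $\M$ explicitly as another cohomogeneity-two torus manifold and to read off the deck group. Write $d = \gcd(r,z)$ and let $\Lambda = \langle \underline x_1, \underline x_2, \underline x_3, \underline x_4\rangle \subset \Z^3$; by the computation immediately preceding the proposition, $\Lambda = \Z \oplus \Z \oplus d\Z$, a sublattice of index $d$. Hence the inclusion $\Lambda \hookrightarrow \Z^3$ is induced by a $d$-fold covering homomorphism $\rho \colon \widehat\T := \R^3/\Lambda \to \T^3 = \R^3/\Z^3$ with kernel $K \cong \Z_d$. The idea is that, upon re-expressing the slopes $\underline x_i$ in a basis of $\Lambda$, the \emph{same} weighted orbit space defines a manifold $\widehat\M$ carrying a $\widehat\T \cong \T^3$ action, and that $\rho$ induces a covering $\widehat\M \to \M$ with deck group $K$.

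First I would check that the proposed orbit data for $\widehat\M$ is legal and that the covering extends across the singular set. Concretely, I must verify that each slope $\underline x_i$ is primitive in $\Lambda$ and that each adjacent pair $\underline x_i, \underline x_{i+1}$ spans a primitive rank-two sublattice of $\Lambda$; these are exactly the conditions that the circle isotropy group $\G(\underline x_i)$ and the $\T^2$ isotropy group $\G(\underline x_i) \times \G(\underline x_{i+1})$ lift isomorphically through $\rho$. Both reduce to short gcd arguments using $d \mid r$, $d \mid z$ together with the hypotheses $\gcd(p,q,r) = \gcd(x,y,z) = 1$ and the legal-weighting relations among the $\underline x_i$; for instance, primitivity of $\underline x_3$ in $\Lambda$ amounts to $\gcd(p,q,r/d) = 1$, which holds because $\gcd(p,q,r) = 1$ and $r/d \mid r$, and the analogous statement for an adjacent pair follows by comparing the $2\times 2$ minors before and after dividing the third coordinate by $d$.

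With these checks in place, the realization theorem of \cite{Oh83} recalled in Section \ref{S:cohom2tori}, applied over $\widehat\T$, produces the closed manifold $\widehat\M$ with orbit space $D^2$ whose slopes are the $\Lambda$-coordinates of the $\underline x_i$, and $\rho$ induces a fibrewise map $\widehat\M \to \M$ which on each orbit is the quotient of $\rho$ by the (isomorphically lifted) isotropy; since $\rho$ has kernel $K \cong \Z_d$, this map is a connected $\Z_d$-covering. It then remains to compute $\pi_1(\widehat\M)$: in a basis of $\Lambda$ the slopes become $(1,0,0)$, $(0,1,0)$, $(p,q,r/d)$, $(x,y,z/d)$, and since $\gcd(r/d, z/d) = \gcd(r,z)/d = 1$ these generate $\Lambda \cong \Z^3$. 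By the ``generate $\Rightarrow$ simply connected'' consequence of (\ref{Eq:fundgp}), $\widehat\M$ is simply connected, so $\widehat\M \to \M$ is the universal covering and $\pi_1(\M) \cong K \cong \Z_{\gcd(r,z)}$. The final assertion is then immediate, since $\pi_1(\M) = 1$ forces $\gcd(r,z) = 1$.

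The step I expect to be the main obstacle is the rigorous construction of the covering $\widehat\M \to \M$, that is, verifying that the fibrewise $d$-fold cover over the principal part genuinely extends across the singular orbits to a smooth covering of closed manifolds. This is precisely where the primitivity of the slopes in $\Lambda$, and of each adjacent pair, is essential; particular care is needed to confirm that no finite isotropy (that is, no exceptional orbit) is introduced in $\widehat\M$, so that the simply-connectedness criterion following (\ref{Eq:fundgp}) indeed applies.
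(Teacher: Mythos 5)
Your argument is correct in outline, but it takes a genuinely different route from the paper. The paper computes $\pi_1(\M)$ by van Kampen: it splits $\M$ into the two $D^2$-bundles $\Omega_1,\Omega_2$ over the preimages $L_1,L_2$ of two opposite boundary arcs, identifies $L_1$ and $L_2$ as lens spaces $L(r;p)$ and $L(qz-ry;\,p-(\lambda q+\mu r)x)$ via Orlik--Raymond (after a unimodular reparametrization of $\T^3$ for the second), and then amalgamates the two cyclic groups, using $\gcd(y,z)=1$ at the last step. You instead construct the universal cover directly: the slope lattice $\Lambda=\<\underline x_1,\dots,\underline x_4\>=\Z\oplus\Z\oplus d\Z$ with $d=\gcd(r,z)$ gives a $d$-fold covering $\R^3/\Lambda\to\T^3$, and rewriting the weighted disk in $\Lambda$-coordinates yields a legally weighted orbit space whose slopes $(1,0,0),(0,1,0),(p,q,r/d),(x,y,z/d)$ generate $\Lambda$, hence a simply connected $\widehat\M$ covering $\M$ with deck group $\Z^3/\Lambda\cong\Z_d$. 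Your gcd verifications are right and do follow from legal weighting alone (note that each $\underline x_i$, being primitive in $\Z^3$ and lying in $\Lambda$, is automatically primitive in $\Lambda$). What your approach buys is a cleaner conceptual statement: it shows that the inclusion in (\ref{Eq:fundgp}) is an equality whenever there are no exceptional orbits, rather than a computation tailored to Figure~4; the paper's argument is more hands-on and self-contained. The price is the step you yourself flag: extending the fibrewise $d$-fold cover across the singular set. This is not automatic from the realization theorem alone, but it becomes immediate once one invokes the cross-section theorem of Orlik--Raymond/Oh to write $\M\cong(\T^3\x D^2)/\!\sim$ with the isotropy circles collapsed over the boundary arcs; then $\rho\x\id$ descends to a $d$-to-$1$ covering precisely because each $\G(\underline x_i)$ and each $\G(\underline x_i)\cdot\G(\underline x_{i+1})$ lifts isomorphically through $\rho$ (your primitivity checks), so no exceptional orbits appear upstairs. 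That gap is therefore fillable from references the paper already uses, but it should be made explicit rather than left as an expectation.
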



\begin{proof}
We want to use the theorem of van Kampen.  Let $b \in \M$ lie on a principal orbit and let $\{\underline e_1, \underline e_2, \underline e_3)\}$ denote the standard basis in $\R^3$.  The circle of slope $(\ell_1, \ell_2, \ell_3)$ in $\T^3$ is, as always, denoted by $\G(\ell_1, \ell_2, \ell_3)$.  Set $\G_i := \G(\underline e_i)$, $i = 1,2,3$.  In particular, from \cite{Oh83} (cf. (\ref{Eq:fundgp})) we know that $\pi_1 (\M, b)$ is generated by the homotopy classes (of circle orbits) $[\G_1 \star b]$, $[\G_2 \star b]$ and $[\G_3 \star b]$.  Denote the arcs in $\M^*$ with weights $(0,1,0)$ and $(x,y,z)$ by $L_1^*$ and $L_2^*$ respectively, and their respective pre-images under the $\T^3$ action by $L_1$ and $L_2$.

Divide $\M$ into two open regions $\Omega_1$ and $\Omega_2$, centred around $L_1$ and $L_2$ respectively, and with $b \in \Omega_1 \cap \Omega_2$.  We may assume that $\Omega_i$ is a $2$-disk bundle over the corresponding arc $L_i$, $i = 1,2$, since the isotropy group $\G_3$ (resp. $\G(x,y,z)$) acts freely away from $L_1$ (resp. $L_2$) (see \cite[Chapter VI, Theorem 2.2]{Br}).  Then the orbit space of $\M$ under the $\T^3$ action is as in Figure 5.
\begin{center}
  \begin{tikzpicture}[scale=1.4]
    \path[coordinate] (1,1)  coordinate(A)
                (-1,1) coordinate(B)
                (-1,-1) coordinate(C)
                (1,-1) coordinate(D)
                (0,0) coordinate(b)
                (0.3, 1) coordinate(ur)
                (-0.3, 1) coordinate(ul)
                (0.3, -1) coordinate(dr)
                (-0.3, -1) coordinate(dl);

    \fill[color = gray!30!white] (ul) -- (B) -- (C) -- (dl) -- cycle;
    \fill[color = gray!30!white] (ur) -- (A) -- (D) -- (dr) -- cycle;
    \fill[color = gray!50!white] (ur) -- (ul) -- (dl) -- (dr) -- cycle;

    \draw (A) -- (B)
          (C) -- (D);
    \draw[ultra thick] (B) -- (C)
                       (D) -- (A);
    \draw[color = black] (0,1.3) node{$\G_1$};
    \draw[color = black] (-1.3, -0.5) node{$\G_2$};
    \draw[color = black] (0,-1.3) node{$\G(p,q,r)$};
    \draw[color = black] (1.6, -0.5) node{$\G(x,y,z)$};
    \draw[color = black] (0.15, 0.15) node{$b^*$};
    \filldraw[black] (A) circle (1.5pt)
                     (B) circle (1.5pt)
                     (C) circle (1.5pt)
                     (D) circle (1.5pt)
                     (b) circle (1pt);

    \draw[loosely dashed] (ur) -- (dr)
                          (ul) -- (dl);
    \draw[color = black] (-1.3, 0.5) node{$L_1^*$};
    \draw[color = black] (1.3, 0.5) node{$L_2^*$};
    \draw[color = black] (-0.6, 0.5) node{$\Omega_1^*$};
    \draw[color = black] (0.6, 0.5) node{$\Omega_2^*$};

    \draw[color = black] (0, -1.8) node{\underline{Figure 5}: Van Kampen decomposition of $\M$};

  \end{tikzpicture}
\end{center}

The points in the interior of the arc $L_1^*$ correspond to points in $\M$ with isotropy $\G_2$, while the endpoints of $L_1^*$ correspond to points in $\M$ with isotropy given by $\G_1 \x \G_2$ and $\G_2 \x \G(p,q,r)$ respectively.  We can therefore think of $L_1^*$ as the quotient of a $\G_1 \x \G_3 = \T^3 / \G_2$ action on the $3$-manifold $L_1$, where the interior of $L_1^*$ corresponds to points with trivial isotropy and the endpoints to points with isotropy $\G(1,0)$ and $\G(p,r)$ respectively.  By the work of Orlik and Raymond \cite{ORa70}, we know that $L_1$ is a lens space $L(r;p)$.

As $\Omega_1$ is a $D^2$-bundle over $L_1$, and since $\G_1 \star b$ and $\G_2 \star b$ are homotopically trivial in $\Omega_1$, it follows that $\pi_1 (\Omega_1,b) \cong \pi_1 (L_1) = \<[\G_3 \star b]\> / \<p [\G_3 \star b]\> = \Z_r$.

Since $\M^*$ is legally weighted, there exist $\lambda, \mu \in \Z$ such that $\lambda y + \mu z = 1$.  Let $A : \R^3 \to \R^3$ be the linear map given by
$$
A = \bpm
    1 & -\lambda x & -\mu x \\
    0 & z & -y \\
    0 & \lambda & \mu
    \epm.
$$
Since $\det A = 1$, it follows that we can reparametrize our $\T^3$ action by $A$, that is, $A \underline e_i$, $i = 1,2,3$, generate $\T^3$.  Notice in particular that
$$
A \underline e_1 = \underline e_1, \quad
A \bpm x \\ y \\ z \epm = \underline e_3 \quad {\rm and} \quad
A \bpm p \\ q \\ r \epm = \bpm p - (\lambda q + \mu r) x \\ qz - ry \\ \lambda q + \mu r \epm.
$$
In terms of the new parameters, denote circles of slope $\underline e_i$ and $(p - (\lambda q + \mu r) x, qz - ry, \lambda q + \mu r)$ by $\tilde \G_i$ and $\tilde \G'$ respectively.  Then we may relabel $\Omega_2^*$ as in Figure 6 below.

\begin{center}
  \begin{tikzpicture}[scale=1.4]
    \path[coordinate] (1,1)  coordinate(A)
                (1,-1) coordinate(D)
                (0,0) coordinate(b)
                (-0.3, 1) coordinate(ul)
                (-0.3, -1) coordinate(dl);

    \draw (A) -- (ul)
          (dl) -- (D);
    \draw[ultra thick]  (D) -- (A);
    \draw[color = black] (0.3,1.3) node{$\tilde \G_1$};
    \draw[color = black] (0.3,-1.3) node{$\tilde \G'$};
    \draw[color = black] (1.3, -0.5) node{$\tilde \G_3$};
    \draw[color = black] (0.15, 0.15) node{$b^*$};
    \filldraw[black] (A) circle (1.5pt)
                     (D) circle (1.5pt)
                     (b) circle (1pt);

    \draw[loosely dashed] (ul) -- (dl);
    \draw[color = black] (1.3, 0.5) node{$L_2^*$};
    \draw[color = black] (0.6, 0.5) node{$\Omega_2^*$};

    \draw[color = black] (0, -1.8) node{\underline{Figure 6}: $\Omega_2^*$ after reparametrization};

  \end{tikzpicture}
\end{center}
Hence, as for $L_1$, we deduce that $L_2$ is a lens space $L(qz - ry; p - (\lambda q + \mu r) x)$ and therefore $\pi_1 (\Omega_2, b) \cong \pi_1 (L_2) = \<[\tilde \G_3 \star b]\> / \<(qz-ry) [\tilde \G_3 \star b]\> = \Z_{qz-ry}$, where $\lambda y + \mu z = 1$.  In our original coordinate system it follows that $\pi_1 (\Omega_2, b) \cong \<[\G(0, \mu, -\lambda) \star b]\> / \<(qz-ry) [\G(0, \mu, -\lambda) \star b]\>$, since $A(0, \mu, -\lambda)^t = \underline e_2$.

Consider now $\Omega_1 \cap \Omega_2$.  By the same reasoning as in \cite{Oh83}, $\pi_1 (\Omega_1 \cap \Omega_2, b)$ is generated by the homotopy classes $[\G_1 \star b]$, $[\G_2 \star b]$ and $[\G_3 \star b]$.  Furthermore, $[\G_1 \star b] = [\G(p,q,r) \star b] = 0 \in \pi_1 (\Omega_1 \cap \Omega_2, b)$.

The inclusion maps induce homomorphisms $\iota_i:\pi_1 (\Omega_1 \cap \Omega_2, b) \to \pi_1 (\Omega_i, b)$, $i = 1,2$.  It is clear from our previous discussion of $\pi_1 (\Omega_1, b)$ that $\iota_1 ([\G_3 \star b]) = [\G_3 \star b]$ and $\iota_1 ([\G_2 \star b]) = [\G_2 \star b] = 0$.  On the other hand, since $A^{-1}$ gives a reparametrization of $\T^3$,
\begin{align*}
\iota_2([\G_3 \star b]) &= \iota_2 (-y [\G(0, \mu, -\lambda) \star b] + \lambda [\G(x,y,z) \star b] - \mu x [\G_1 \star b]) \\
&= -y \iota_2([\G(0, \mu, -\lambda) \star b]) + \lambda \iota_2([\G(x,y,z) \star b]) \\
&= -y [\G(0, \mu, -\lambda) \star b] + \lambda [\G(x,y,z) \star b] \\
&= -y [\G(0, \mu, -\lambda) \star b]
\end{align*}
since $[\G(x,y,z) \star b] = 0 \in \pi_1 (\Omega_2, b)$.  Similarly $\iota_2([\G_2 \star b]) = z [\G(0, \mu, -\lambda) \star b]$.

Let $[\alpha] = [\G_3 \star b] \in \pi_1 (\Omega_1, b)$ and $[\beta] = [\G(0, \mu, -\lambda) \star b] \in \pi_1 (\Omega_2, b)$.  Then, by van Kampen's Theorem
\begin{align*}
\pi_1 (\M, b) &= (\pi_1 (\Omega_1, b) * \pi_1 (\Omega_2, b)) / \<\iota_1([\G_i \star b]) = \iota_2([\G_i \star b]), \ i = 2, 3\> \\
&= \<[\alpha], [\beta] \mid r[\alpha] = 0, (qz-ry)[\beta] = 0\> / \<[\alpha] = - y [\beta], \ z [\beta] = 0\> \\
&= \<[\beta] \mid z [\beta] = 0, ry [\beta] = 0\> \\
&= \<[\beta] \mid \gcd(z, ry) [\beta] = 0\> \\
&= \Z_{\gcd(r, z)}, \qquad \quad {\rm since} \ \gcd(y,z) = 1.
\end{align*}
\end{proof}


\begin{cor}
\label{Cor:fundgp}
If $\T^3$ acts smoothly and effectively on a smooth, compact, simply connected $5$-manifold $\M^5$ with (legally) weighted orbit space as in Figure 4, then
\beq
\label{Eq:gcdCond}
\gcd(r,z) = 1, \ \ \gcd(p,r) = 1, \ \ \gcd(y,z) = 1 \ \  \text{and} \ \ \gcd(py-qx, rx - pz, qz - ry) = 1.
\eeq
\end{cor}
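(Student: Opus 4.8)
The plan is to derive the four conditions in (\ref{Eq:gcdCond}) from two sources already available: the condition $\gcd(r,z)=1$ will come directly from Proposition \ref{Prop:fundgp}, while the remaining three will be read off from the requirement that the orbit space of Figure 4 be legally weighted. Since $\M^5$ is assumed simply connected with weighted orbit space as in Figure 4, Proposition \ref{Prop:fundgp} gives $\pi_1(\M)=\Z_{\gcd(r,z)}=0$, and hence $\gcd(r,z)=1$ at once.

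For the other three conditions I would recall from Section \ref{S:cohom2tori} that $\M^*$ being legally weighted means precisely that any two adjacent slope vectors $\underline x_i,\underline x_{i+1}$ can be completed by a single further vector $\underline v\in\Z^3$ to a $(3\x 3)$ integer matrix of determinant $\pm 1$; equivalently, the circles $\G(\underline x_i)$ and $\G(\underline x_{i+1})$ have trivial intersection. The key elementary fact I would invoke is that a pair of integer vectors extends to a $\Z$-basis of $\Z^3$ if and only if the greatest common divisor of the three $(2\x 2)$ minors of the $(2\x 3)$ matrix with rows $\underline x_i,\underline x_{i+1}$ equals $1$ (this is immediate from Smith normal form, and matches the trivial-intersection characterization recalled in Section \ref{S:Prelim}). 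It therefore suffices to compute these minors for each adjacent pair of slopes around the boundary of Figure 4.

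Carrying out this computation, for the adjacent pair $\underline x_2=(0,1,0)$ and $\underline x_3=(p,q,r)$ the three minors are $-p$, $0$, and $r$, so legality forces $\gcd(p,r)=1$; for $\underline x_4=(x,y,z)$ and $\underline x_1=(1,0,0)$ the minors are $-y$, $-z$, and $0$, forcing $\gcd(y,z)=1$; and for $\underline x_3=(p,q,r)$ and $\underline x_4=(x,y,z)$ the minors are $py-qx$, $pz-rx$, and $qz-ry$, forcing $\gcd(py-qx,\,rx-pz,\,qz-ry)=1$, the sign on the middle entry being immaterial for the gcd. (The remaining adjacent pair $\underline x_1,\underline x_2$ has minor gcd $1$ automatically and imposes no condition.) Together with $\gcd(r,z)=1$ this yields all of (\ref{Eq:gcdCond}).

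I expect no genuine obstacle here: the argument is essentially a bookkeeping exercise once the right framework is in place. The only points demanding a little care are the equivalence between \emph{completes to a unimodular matrix} and \emph{the $(2\x 2)$ minors have gcd $1$}, and the correct matching of each adjacent pair in Figure 4 with the condition it produces; beyond that the proof is a direct minor computation.
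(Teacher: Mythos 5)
Your argument is correct and follows the same route as the paper: $\gcd(r,z)=1$ is pulled from Proposition \ref{Prop:fundgp}, and the other three conditions are extracted from the legal-weighting requirement on adjacent slope pairs; the paper merely asserts the equivalence between ``extends to a unimodular $3\times 3$ matrix'' and the gcd conditions, whereas you justify it via the standard criterion that the $2\times 2$ minors have gcd $1$ (which also follows directly from cofactor expansion of $\det(\underline x_i,\underline x_{i+1},\underline v)$ along the row $\underline v$, since the achievable determinants form exactly the ideal generated by the minors). Your minor computations for each adjacent pair are correct.
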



\begin{proof}
The fact that $\gcd(r,z) = 1$ follows directly from Proposition \ref{Prop:fundgp}.  From the discussion in Section \ref{S:cohom2tori} the orbit space is legally weighted if and only if there exist triples $\underline y_i = (\alpha_i, \beta_i, \gamma_i) \in \Z^3$, $i = 1,2,3,4$, such that
$$
\det \bpm \underline y_1 \\ \underline x_1 \\ \underline x_2 \epm = \pm 1, \ \
\det \bpm \underline y_2 \\ \underline x_2 \\ \underline x_3 \epm = \pm 1, \ \
\det \bpm \underline y_3 \\ \underline x_3 \\ \underline x_4 \epm = \pm 1 \ \ {\rm and} \ \
\det \bpm \underline y_4 \\ \underline x_4 \\ \underline x_1 \epm = \pm 1.
$$
This is possible if and only if the other three $\gcd$ conditions hold.
\end{proof}

Suppose that we have a weighted orbit space as in Figure 4 and that the conditions in (\ref{Eq:gcdCond}) hold.  Set $a = z$, $b = -(rx - pz)$, $c = r$ and $d = qz - ry$.  Then $\gcd(a,c) = 1$, $\gcd(a, d) = 1$ and $\gcd(b, c) = 1$.  Let $m, n \in \Z$ such that $am + cn = 1$.

Then $(am + cn) b = b = - cx + ap$, hence $c(x + bn) = a(p - bm)$.  Therefore, since $\gcd(a,c) = 1$, $x = -bn - ak$ and $p = bm - ck$, for some $k \in \Z$.  Similarly, it follows that $y = -dn - a\ell$ and $q = dm - c\ell$, for some $\ell \in \Z$.  Further, we find that $\gcd(b,d,py-qx) = 1$ implies that $\gcd(b,d) = 1$.

Therefore all possible legally weighted orbit spaces as in Figure 4 for a smooth, effective $\T^3$ action on the simply connected manifolds $\sph^3 \x \sph^2$ and $\sph^3 \tilde\x \sph^2$ are given by the diagram

  \begin{center}
  \begin{tikzpicture}[scale=1.4]
    \path[coordinate] (1,1)  coordinate(A)
                (-1,1) coordinate(B)
                (-1,-1) coordinate(C)
                (1,-1) coordinate(D);

    \draw (A) -- (B) -- (C) -- (D) -- (A);
    \draw[color = black] (0,1.3) node{$\underline x_1 = (1,0,0)$};
    \draw[color = black] (-1.8, 0) node{$\underline x_2 = (0,1,0)$};
    \draw[color = black] (0,-1.3) node{$\underline x_3 = (b m - c k, d m - c \ell, c)$};
    \draw[color = black] (2.6, 0) node{$\underline x_4 = (-b n - a k, -d n - a \ell, a)$};
    \filldraw[black] (A) circle (1.5pt)
                     (B) circle (1.5pt)
                     (C) circle (1.5pt)
                     (D) circle (1.5pt);

    \draw[color = black] (0, -1.8) node{\underline{Figure 7}: Possible legally weighted orbit spaces for $\sph^3 \x \sph^2$ and $\sph^3 \tilde\x \sph^2$};

  \end{tikzpicture}
  \end{center}
where $\gcd(a,c) = \gcd(a,d) = \gcd(b,c) = \gcd(b,d) = 1$, hence $\gcd(ab,cd) = 1$, and $k, \ell, m, n \in \Z$ with $am + cn = 1$.


\section{Free actions on $\sph^3 \x \sph^3$}
\label{S:Biquotients}

Recall that $\sph^3 \x \sph^3$ is a Lie group consisting of pairs of unit quaternions, i.e. pairs $\left(\bsm q_1 \\ q_2 \esm\right)$ where $q_n = \alpha_n + \beta_n j$, with $\alpha_n, \beta_n \in \C$, $|\alpha_n|^2 + |\beta_n|^2 = 1$, for $n = 1,2$.  Recall that $ij = - ji$ and so, in particular, $\beta j = j \bar \beta$ for all $\beta \in \C$.

Define $\sph^1_j := \{\beta j \ | \ \beta \in \C, \ |\beta| = 1\} \subset \sph^3$.  The image under the quotient map of points $\left(\bsm q_1 \\ q_2 \esm \right) \in \sph^3 \x \sph^3$ and subgroups $\HH \subset \sph^3 \x \sph^3$ will always be denoted by $\left[\bsm q_1 \\ q_2 \esm \right]$ and $[\HH]$ respectively.

Let $\met_0$ be a bi-invariant metric on $\sph^3 \x \sph^3$.  The usual isometric $\T^4$ action on $(\sph^3 \x \sph^3, \met_0)$, namely
$$
(u,v,w,z) \star \bpm q_1 \\ q_2 \epm =
 \bpm u q_1 \bar v \\ w q_2 \bar z \epm, \quad (u,v,w,z) \in \T^4, \ q_1, q_2 \in \sph^3,
$$
is not effective.  Consider instead the $\T^4$ action given by
\beq
\label{Eq:EffT4}
(u,v,w,z) \star \bpm q_1 \\ q_2 \epm =
 \bpm u \alpha_1 + v \beta_1 j \\
     w \alpha_2 + z \beta_2 j \epm,
\quad (u,v,w,z) \in \T^4, \ q_1, q_2 \in \sph^3.
\eeq
This action is clearly effective.  Moreover, it is an isometric action since it may be rewritten as a (well-defined) two-sided action as follows:
$$
(u,v,w,z) \star \bpm q_1 \\ q_2 \epm =
 \bpm u^\frac{1}{2} v^\frac{1}{2} q_1 u^\frac{1}{2} \bar v^\frac{1}{2} \\
      w^\frac{1}{2} z^\frac{1}{2} q_2 w^\frac{1}{2} \bar z^\frac{1}{2} \epm.
$$
\begin{rem}
Define $\SUU{2} := \{(A,B) \in \U(2) \x \U(2) \mid \det (A) = \det (B)\}$.  Then the action (\ref{Eq:EffT4}) can, in fact, be thought of as an action of $\T^4 \subset \SUU{2}^2$ on $\SU(2) \x \SU(2)$ since, for example, an $\sph^1$ action $z \star q = z^{\frac{2k+1}{2}} q \bar z^{\frac{2\ell+1}{2}}$, $q = \alpha + \beta j \in \sph^3$, $z \in \sph^1$, is equivalent to an action of $\sph^1 \subset \SUU{2}$ on $\SU(2)$ via
$$
z \star \bpm \alpha & \beta \\ - \bar \beta & \bar \alpha \epm =
\bpm z^k & \\ & \bar z^{k+1} \epm
\bpm \alpha & \beta \\ - \bar \beta & \bar \alpha\epm
\bpm \bar z^\ell & \\ & z^{\ell +1}\epm.
$$
\end{rem}
\vspace{3mm}

Consider the tuples $(a,b,c,d), (-n,k,m,\ell) \in \Z^4$, such that $am + cn = 1$.  As
$$
\det \bpm 0 & 1 & 0 & 0 \\
          0 & 0 & 0 & 1 \\
          -n & k & m & \ell \\
          a & b & c & d \epm = am + cn = 1
$$
we may reparametrize the effective, isometric action (\ref{Eq:EffT4}) via
$$
(u,v,w,z) \mapsto
(z^a \bar w^n, u z^b w^k, z^c w^m, v z^d w^\ell)
$$
to give an effective, isometric $\T^4$ action on $(\sph^3 \x \sph^3, \met_0)$ via
\beq
\label{Eq:effT4}
(u,v,w,z) \star \bpm q_1 \\ q_2 \epm =
\bpm z^a \bar w^n \alpha_1 + u z^b w^k\beta_1 j \\
     z^c w^m \alpha_2 + v z^d w^\ell \beta_2 j \epm.
\eeq
In particular, if the circle (resp. torus) given by the $z$-coordinate (resp. $(w,z)$-coordinates) acts freely on $\sph^3 \x \sph^3$, then there is an induced effective, isometric action on the quotient by the $(u,v,w)$-torus (resp. $(u,v)$-torus).

This raises the question of when the $z$-circle and $(w,z)$-torus act freely.  It is easy to check that $\sph^1$ acts freely on $\sph^3 \x \sph^3$ via
\beq
\label{Eq:freeS1}
z \star \bpm q_1 \\ q_2 \epm =
\bpm z^a \alpha_1 + z^b \beta_1 j \\
     z^c \alpha_2 + z^d \beta_2 j \epm
\eeq
if and only if $\gcd(a,c) = \gcd(a,d) = \gcd(b,c) = \gcd(b,d) =1$, that is, if and only if $\gcd(ab,cd) = 1$.  We denote the quotient $(\sph^3 \x \sph^3, \met_0) \bq \sph^1$ by $\M^5_{a,b,c,d}$.

DeVito \cite{DV} has classified up to diffeomorphism all possible biquotients $(\sph^3 \x \sph^3) \bq \sph^1$.  More precisely, he has shown that only $\sph^3 \x \sph^2$ and $\sph^3 \tilde\x \sph^2$ can arise.  Since both of these manifolds satisfy $H_2(\M^5; \Z) = \Z$, this classification amounts to computing the second Stiefel-Whitney class $w_2((\sph^3 \x \sph^3) \bq \sph^1) \in H^2 (\M^5; \Z_2)$ (cf. \cite{Ba}).  A consequence of DeVito's classification is that the quotient resulting from the explicit displayed action suggested by Pavlov \cite{Pav04} and claimed to be $\sph^3 \tilde\x \sph^2$, in fact has $w_2 = 0$, hence must be diffeomorphic to $\sph^3 \x \sph^2$.  In the paragraph preceeding this displayed action, Pavlov describes in words an action which gives the correct quotient.

It can be shown that $w_2 (\M^5_{a,b,c,d}) = a + b + c + d \in \Z_2$.  Hence, together with the freeness condition, it follows that $\M^5_{a,b,c,d}$ is diffeomorphic to $\sph^3 \tilde\x \sph^2$ if and only if exactly one of $a$, $b$, $c$ or $d$ is even (and is diffeomorphic to $\sph^3 \x \sph^2$ otherwise).  It should be noted that although we have parametrized our $\sph^1$ actions on $\sph^3 \x \sph^3$ differently to those in \cite{DV}, the computation of the Stiefel-Whitney class $w_2$ is completely analogous to the computation carried out therein, and hence has been omitted.

On the other hand, $\T^2$ acts freely on $\sph^3 \x \sph^3$ via
\beq
\label{Eq:freeT2}
(w,z) \star \bpm q_1 \\ q_2 \epm =
\bpm z^a \bar w^n \alpha_1 + z^b w^k\beta_1 j \\
     z^c w^m \alpha_2 + z^d w^\ell \beta_2 j \epm
\eeq
if and only if (without loss of generality)
\beq
\label{Eq:T2freeness}
am + cn = 1, \ \ a \ell + d n = \vep_2, \ \ b m - c k = \vep_3 \ \ {\rm and} \ \ b \ell - d k = \vep_4,
\eeq
where $\vep_2, \vep_3, \vep_4 = \pm 1$.

\begin{lem}
\label{lem:relprime}
Suppose $a, c \in \Z$ are relatively prime and let $m_0, n_0 \in \Z$ such that $a m_0 + c n_0 = 1$.  Then $m,n \in \Z$ satisfy $a m + c n = 1$ if and only if there exists $x \in \Z$ such that $m = m_0 - cx$ and $n = n_0 + ax$.
\end{lem}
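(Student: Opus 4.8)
The plan is to prove both implications of the biconditional directly, treating the statement as the standard description of the solution set of a linear Diophantine equation. The ``if'' direction is a one-line substitution, so I would dispose of it first. Given $m = m_0 - cx$ and $n = n_0 + ax$ for some $x \in \Z$, I would simply compute
\[
am + cn = a(m_0 - cx) + c(n_0 + ax) = (am_0 + cn_0) - acx + acx = am_0 + cn_0 = 1,
\]
using the hypothesis $am_0 + cn_0 = 1$. This shows that every pair of the stated form is indeed a solution.

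For the ``only if'' direction, I would start from two solutions and subtract. Suppose $am + cn = 1$. Combined with $am_0 + cn_0 = 1$, subtraction yields $a(m - m_0) + c(n - n_0) = 0$, that is, $a(m - m_0) = c(n_0 - n)$. The key step is then to extract an integer $x$ from the coprimality of $a$ and $c$: from this identity $c$ divides $a(m - m_0)$, and since $\gcd(a,c) = 1$ it follows that $c \mid (m - m_0)$. Hence there is an integer $x$ with $m - m_0 = -cx$, i.e. $m = m_0 - cx$. Substituting this back gives $-acx = c(n_0 - n)$; cancelling $c$ (valid when $c \neq 0$) produces $n = n_0 + ax$, as required.

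The only obstacle worth flagging is the degenerate case $c = 0$ (and, symmetrically, $a = 0$), where one cannot cancel $c$ in the final step. Here, however, $\gcd(a,c) = 1$ forces $a = \pm 1$, so $am + cn = 1$ reduces to $am = 1$, giving $m = \pm 1 = m_0$; the stated formula $m = m_0 - cx = m_0$ then holds automatically, while $n = n_0 + ax$ ranges over all of $\Z$ as $x$ does (since $a = \pm 1$), consistent with $n$ being unconstrained. Thus the parametrization remains valid in this case as well, completing both directions and hence the proof.
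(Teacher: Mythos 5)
Your proof is correct and follows essentially the same route as the paper: the ``only if'' direction is obtained by subtracting the two equations and invoking coprimality of $a$ and $c$, with the degenerate cases $a=0$ or $c=0$ set aside (the paper calls them trivial, you spell them out). The extra detail you give for the ``if'' direction and the degenerate case is harmless and fine.
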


\begin{proof}
As the cases $a = 0$ and $c = 0$ are trivial, assume that $a,c \neq 0$.  By subtracting $a m + c n = 1$ from $a m_0 + c n_0 = 1$ it follows that $a(m_0 - m) = - c(n_0 - n)$.  But $a$ and $c$ are assumed to be relatively prime, hence $n_0 - n = - ax$ and $m_0 - m = cx$ for some $x \in \Z$.
\end{proof}

Using this lemma, one can describe all possible free $\T^2$ actions on $\sph^3 \x \sph^3$.  In the case where $\vep_2 \vep_3 \vep_4 = 1$ it turns out that, up to reparametrization of the action and diffeomorphisms of $\sph^3 \x \sph^3$ (namely $q_1 \leftrightarrow q_2$ and $q_i \mapsto \bar q_i$), the action has the form
\beq
\label{Eq:inffreeT2}
(w,z) \star \bpm q_1 \\ q_2 \epm =
\bpm z \bar w^r \alpha_1 + z w^{r+\lambda} \beta_1 j \\
     w \alpha_2 + w \beta_2 j \epm
\eeq
where $r \in \Z$ and $\lambda \in \{0,1\}$.  Furthermore, $\M^4_{r,\lambda} := (\sph^3 \x \sph^3, \met_0) \bq \T^2$ is diffeomorphic to $\sph^2 \x \sph^2$ for $\lambda = 0$, and to $\cp^2 \# - \cp^2$ for $\lambda = 1$.  This follows from work in \cite{DV} via a change of parameters (cf. also \cite{Ch}, \cite{To}).

In the case where $\vep_2 \vep_3 \vep_4 = -1$ it can be shown that, up to reparametrization of the action and diffeomorphisms of $\sph^3 \x \sph^3$, the action has the form
\beq
\label{Eq:unifreeT2}
(w,z) \star \bpm q_1 \\ q_2 \epm =
\bpm z \alpha_1 + w \beta_1 j \\
     \bar z w \alpha_2 + z w \beta_2 j \epm.
\eeq
The quotient $\M^4 = (\sph^3 \x \sph^3, \met_0) \bq \T^2$ is diffeomorphic to $\cp^2 \# \cp^2$ (cf. \cite{To}, \cite{DV}).
\begin{rem}
Recall that the only smooth, compact, simply connected $4$-manifolds admitting both non-negative curvature and an isometric circle action are $\sph^4$, $\cp^2$, $\cp^2 \# \pm\cp^2$ and $\sph^2 \x \sph^2$ (cf. \cite{Kl}, \cite{SY}).  The long exact homotopy sequence for a fibration $\T^2 \to \sph^3 \x \sph^3 \to \M^4$ shows that $\sph^4$ and $\cp^2$ cannot arise as biquotients $(\sph^3 \x \sph^3, \met_0) \bq \T^2$.
\end{rem}

\section{Torus actions on four-dimensional biquotients}
\label{S:4dim}

In the $\T^4$ action described by (\ref{Eq:effT4}), let $\T^2_{uv}$ and $\T^2_{wz}$ denote the $2$-tori given by the $(u,v)$ and $(w,z)$ coordinates respectively.  Consider the induced effective, isometric $\T^2_{uv}$ action on any $\M^4 = (\sph^3 \x \sph^3) \bq \T^2_{wz}$, where $\T^2_{wz}$ acts freely (as in (\ref{Eq:inffreeT2}) or (\ref{Eq:unifreeT2})), namely
\beq
\label{Eq:uvact}
(u,v) \star \bbm q_1 \\ q_2 \ebm =
\bbm \alpha_1 + u \beta_1 j \\ \alpha_2 + v \beta_2 j \ebm.
\eeq

It is clear that the quotient of $\sph^1 \x \sph^1 \subset \sph^3 \x \sph^3$ under $\T^2_{wz}$ is a point, whereas $\sph^1 \x \sph^3 \subset \sph^3 \x \sph^3$ has quotient diffeomorphic to $\sph^2$.  Similarly, $\sph^1 \x \sph^1_j$, $\sph^1_j \x \sph^1$ and $\sph^1_j \x \sph^1_j$ quotient to points, while $\sph^1_j \x \sph^3$, $\sph^3 \x \sph^1$ and $\sph^3 \x \sph^1_j$ have quotients diffeomorphic to $\sph^2$.

For each of the biquotients $(\sph^3 \x \sph^3, \met_0) \bq \T^2_{wz}$ we will determine the fixed-point set of the corresponding effective, isometric $\T^2_{uv}$ action, as well as any additional isotropy that may arise.  As mentioned in Section \ref{S:cohom2tori}, the only possible isotropy groups of an effective $\T^2$ action on a four-dimensional manifold are $\sph^1$ and $\T^2$.

\begin{lem}
\label{lem:T2isotropy}
Let $\T^2_{wz}$ act on $(\sph^3 \x \sph^3, \met_0)$ via (\ref{Eq:inffreeT2}) and let $\T^2_{uv}$ act on $\M^4_{r,\lambda} = (\sph^3 \x \sph^3, \met_0) \bq \T^2_{wz}$ via (\ref{Eq:uvact}).  Then the fixed points of the action are the four points $[\sph^1 \x \sph^1]$, $[\sph^1_j \x \sph^1]$, $[\sph^1 \x \sph^1_j]$, $[\sph^1_j \x \sph^1_j] \in \sph^3 \x \sph^3 \bq \T^2_{k, \vep}$, while a point has $\sph^1$ isotropy if and only if it lies in one of the four two-spheres $[\sph^1 \x \sph^3]$, $[\sph^1_j \x \sph^3]$, $[\sph^3 \x \sph^1]$ or $[\sph^3 \x \sph^1_j]$.  In particular, the $\sph^1$ isotropy subgroups of $\T^2_{uv}$ are arranged according to the diagram
\beq
\label{fig:dim4isotropy}
\xymatrix{
[\sph^1 \x \sph^1] \ar@{.}[rr]^{\{(u,1)\}}_{[\sph^1 \x \sph^3]} \ar@{.}[dd]_{\{(1,v)\}}^{[\sph^3 \x \sph^1]} & &
[\sph^1 \x \sph^1_j] \ar@{.}[dd]^{\{(v^{2r+\lambda}, v)\}}_{[\sph^3 \x \sph^1_j]} \\ & & \\
[\sph^1_j \x \sph^1] \ar@{.}[rr]_{\{(u,1)\}}^{[\sph^1_j \x \sph^3]} & &
[\sph^1_j \x \sph^1_j]
}
\eeq
\end{lem}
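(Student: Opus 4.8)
The plan is to compute directly, for each point of $\sph^3 \x \sph^3$, the isotropy subgroup of the induced $\T^2_{uv}$ action (\ref{Eq:uvact}) on the quotient $\M^4_{r,\lambda} = (\sph^3 \x \sph^3, \met_0) \bq \T^2_{wz}$. The key observation is that a point $[\bsm q_1 \\ q_2 \esm]$ is fixed by $(u,v) \in \T^2_{uv}$ precisely when there exists $(w,z) \in \T^2_{wz}$ such that the combined action brings $\bsm q_1 \\ q_2 \esm$ back to itself; that is, when the $\T^2_{uv}$-displacement can be undone by an element of the free $\T^2_{wz}$ action. So the isotropy calculation in the quotient amounts to analyzing the full $\T^4$ action (\ref{Eq:effT4}) with the $(w,z)$-parameters specialized according to (\ref{Eq:inffreeT2}), namely $a=1$, $b=1$, $c=0$, $d=0$ with the $w$-exponents $(-r, r+\lambda, 1, 1)$.

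\emph{Analyzing the strata.} First I would write $q_n = \alpha_n + \beta_n j$ and observe that the $\T^4$ action scales $\alpha_n$ and $\beta_n$ by independent unit complex numbers. The fixed-point condition on the quotient becomes: for some $(w,z)$, the scaling factors on each of $\alpha_1, \beta_1, \alpha_2, \beta_2$ that do \emph{not} vanish must equal $1$. A point with all four coordinates nonzero forces four independent circle conditions, which (by the freeness of $\T^2_{wz}$) collapse the $\T^2_{uv}$-isotropy to the identity; hence the principal orbits lie in the open dense stratum. To obtain larger isotropy one must kill coordinates. Setting $\beta_1 = \beta_2 = 0$ leaves the torus $\sph^1 \x \sph^1$, and I would check that on its image the condition becomes vacuous in $u$ and $v$ after absorbing $(w,z)$, yielding a fixed point; the analogous computations for $\sph^1_j \x \sph^1$, $\sph^1 \x \sph^1_j$, $\sph^1_j \x \sph^1_j$ (each obtained by sending the appropriate $\alpha$'s or $\beta$'s to zero) give the four fixed points. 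For the one-parameter strata $\sph^1 \x \sph^3$, $\sph^1_j \x \sph^3$, $\sph^3 \x \sph^1$, $\sph^3 \x \sph^1_j$, exactly one coordinate in one factor vanishes, so only one of the two circle conditions from that factor survives; solving it pins down a single circle in $\T^2_{uv}$ as the isotropy, while the free $\T^2_{wz}$ quotient reduces the $\sph^3$-factor to an $\sph^2$.

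\emph{Reading off the slopes.} The remaining work is to record the precise slope $\G(s,t) \subset \T^2_{uv}$ fixing each two-sphere, which is what the diagram (\ref{fig:dim4isotropy}) encodes. For the edges $[\sph^1 \x \sph^3]$ and $[\sph^1_j \x \sph^3]$ (where $q_2$ is unconstrained), the $\T^2_{wz}$-element needed to cancel the $u$-action on $q_1$ must simultaneously fix $\alpha_2$ and $\beta_2$; since $q_2$ carries the $w$-action with both exponents equal to $1$, setting $w=1$ is forced, and then the condition on $u$ alone gives the slope $\{(u,1)\}$. For $[\sph^3 \x \sph^1]$ the symmetric analysis with $q_1$ unconstrained forces $z=1$ and a compensating $w$, yielding $\{(1,v)\}$. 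The one genuinely non-trivial slope is that of $[\sph^3 \x \sph^1_j]$: here $q_1$ is free, so canceling the $v$-action on $q_2$ requires a $(w,z)$ that also fixes $\alpha_1$ and $\beta_1$, and because the $w$-exponents on $q_1$ are $-r$ and $r+\lambda$ (rather than equal), the compensating torus element feeds back a factor of $v^{2r+\lambda}$ into the $u$-coordinate, producing the slope $\{(v^{2r+\lambda}, v)\}$.

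\emph{Main obstacle.} The only real subtlety — and the step I expect to demand the most care — is this last bookkeeping of how the free $\T^2_{wz}$ compensation couples the $w$- and $z$-exponents across the two quaternionic factors, since it is precisely the asymmetry $(-r, r+\lambda)$ in (\ref{Eq:inffreeT2}) that produces the twisted slope $(v^{2r+\lambda}, v)$ and distinguishes the $\lambda=0$ and $\lambda=1$ cases. Once the slopes are verified, the count of four fixed points with the four connecting circle-isotropy arcs matches the orbit-space description of Section \ref{S:cohom2tori}, and the lemma follows.
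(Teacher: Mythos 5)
Your proposal is correct and follows essentially the same route as the paper: identify a point's $\T^2_{uv}$-isotropy by asking when the displacement can be undone by an element of the free $\T^2_{wz}$ action, then run the case analysis over which of $\alpha_1,\beta_1,\alpha_2,\beta_2$ vanish, solving the resulting compensation equations to read off the fixed points and the circle slopes (in particular the twisted slope $(v^{2r+\lambda},v)$ on $[\sph^3\x\sph^1_j]$ coming from the exponents $-r$ and $r+\lambda$). The only nitpick is in the prose for the edges $[\sph^1\x\sph^3]$ and $[\sph^3\x\sph^1]$: there the free circle coordinate ($u$ resp.\ $v$) is unconstrained because it acts on a coordinate that vanishes, rather than being cancelled by a compensating $(w,z)$; this does not affect the conclusion.
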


\begin{proof}
Suppose that $\left[\bsm q_1 \\ q_2 \esm \right] \in \M^4_{r, \lambda}$ is fixed by some element of $\T^2_{uv}$.  That is, there exist $(w,z) \in \T^2_{wz}$ such that
$$
\bpm \alpha_1 + \beta_1 j \\ \alpha_2 + \beta_2 j \epm =
\bpm q_1 \\ q_2 \epm =
\bpm z \bar w^r \alpha_1 + u z w^{r + \lambda} \beta_1 j \\
     w \alpha_2 + w v \beta_2 j \epm.
$$
Then each of the following four conditions must hold:
\begin{itemize}
\item $\alpha_1 = 0$ or $z \bar w^r = 1$;
\item $\beta_1 = 0$ or $u z w^{r + \lambda} = 1$;
\item $\alpha_2 = 0$ or $w = 1$;
\item $\beta_2 = 0$ or $w v = 1$.
\end{itemize}
First suppose that $\alpha_1 = \alpha_2 = 0$.  Then $w = \bar v$ and $1 = u z w^{r + \lambda} = u z \bar v^{r + \lambda}$, since $|\beta_1| = |\beta_2| = 1$.  Therefore $z = \bar u v^{r + \lambda}$ and hence for all $(u,v)$ we have $(u,v) \star [\sph^1_j \x \sph^1_j] = [\sph^1_j \x \sph^1_j]$. That is, $[\sph^1_j \x \sph^1_j]$ is a fixed point of the $\T^2_{uv}$ action.

The analogous computations in the cases $\beta_1 = \alpha_2 = 0$, $\alpha_1 = \beta_2 = 0$ and $\beta_1 = \beta_2 = 0$ show that $[\sph^1 \x \sph^1_j]$, $[\sph^1_j \x \sph^1]$ and $[\sph^1 \x \sph^1]$, respectively, are also fixed points of the $\T^2_{uv}$ action.

By effectiveness of the $\T^2_{uv}$ action, it is clear that whenever all of $\alpha_1$, $\alpha_2$, $\beta_1$ and $\beta_2$ are non-zero, the isotropy group is trivial.

Now suppose that $\alpha_2 = 0$ and $\alpha_1, \beta_1 \neq 0$.  Then $w = \bar v$ and $z \bar w^r = 1 = u z w^{r + \lambda}$.  Hence $z = \bar v^r = \bar u v^{r + \lambda}$, from which it follows that $u = v^{2r + \lambda}$.  Therefore each point of $[\sph^3 \x \sph^1_j]$ is fixed by the circle $\{(v^{2r + \lambda}, v)\}$ in $\T^2_{uv}$.

Similarly we find that the points of $[\sph^1 \x \sph^3]$, $[\sph^1_j \x \sph^3]$ and $[\sph^3 \x \sph^1]$ are fixed by the circles $\{(u, 1)\}$, $\{(u, 1)\}$ and $\{(1, v)\}$ in $\T^2_{uv}$ respectively.
\end{proof}

\begin{prop}
\label{prop:4mnfds}
Every smooth, effective $\T^2$ action on $\sph^2 \x \sph^2$, $\cp^2 \# - \cp^2$ or $\cp^2 \# \cp^2$ is equivariantly diffeomorphic to an effective, isometric $\T^2_{uv}$ action on the corresponding biquotient $(\sph^3 \x \sph^3, \met_0) \bq \T^2_{wz}$.
\end{prop}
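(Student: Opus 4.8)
The plan is to deduce the proposition from the equivariant classification recalled in Section \ref{S:cohom2tori}: two smooth, compact, simply connected $4$-manifolds equipped with smooth, effective $\T^2$ actions are equivariantly diffeomorphic if and only if their weighted orbit spaces agree up to a weight-preserving diffeomorphism. By the analysis in Section \ref{S:dim4}, every smooth, effective $\T^2$ action on one of $\sph^2\x\sph^2$, $\cp^2\#-\cp^2$ or $\cp^2\#\cp^2$ has exactly four fixed points, since $\chi(\M^4)=4=\chi(\Fix(\M^4,\T^2))$ for each of these manifolds, and is therefore represented by one of the weighted orbit spaces in Figure 3. Thus it suffices to show that, as the parameters of the free $\T^2_{wz}$ action and of the induced isometric $\T^2_{uv}$ action vary, the orbit spaces of the latter realize \emph{every} weighted orbit space appearing in Figure 3, with the biquotient in each case diffeomorphic to the prescribed manifold.

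For the biquotients arising from (\ref{Eq:inffreeT2}) this is immediate from Lemma \ref{lem:T2isotropy}. Reading the $\sph^1$-isotropy slopes around the boundary of the orbit space in diagram (\ref{fig:dim4isotropy}) yields the cyclic weight sequence $(1,0)$, $(2r+\lambda,1)$, $(1,0)$, $(0,1)$, in which the two copies of $(1,0)$ sit opposite one another; this is precisely the left-hand orbit space of Figure 3 with $k=2r+\lambda$. As $r$ ranges over $\Z$ and $\lambda\in\{0,1\}$, the integer $k=2r+\lambda$ attains every value, with $\lambda=0$ giving even $k$ and $\lambda=1$ giving odd $k$. Since $\M^4_{r,\lambda}$ is $\sph^2\x\sph^2$ for $\lambda=0$ and $\cp^2\#-\cp^2$ for $\lambda=1$, this matches exactly the diffeomorphism types recorded in Figure 3, so all left-hand orbit spaces are realized on the corresponding biquotients.

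For $\cp^2\#\cp^2$, which arises from the free action (\ref{Eq:unifreeT2}), I would carry out the computation entirely analogous to Lemma \ref{lem:T2isotropy}: impose the fixed-point equations for the induced $\T^2_{uv}$ action (\ref{Eq:uvact}), locate the four fixed points among the images $[\sph^1\x\sph^1]$, $[\sph^1_j\x\sph^1]$, $[\sph^1\x\sph^1_j]$, $[\sph^1_j\x\sph^1_j]$, and read off the circle isotropy along each of the four two-spheres. The expected outcome is the cyclic slope sequence $(1,0)$, $(1,1)$, $(1,2)$, $(0,1)$, whose adjacent determinants are all $\pm1$; swapping the $u$ and $v$ coordinates sends this to $(0,1)$, $(1,1)$, $(2,1)$, $(1,0)$, which after a cyclic rotation is exactly the right-hand orbit space of Figure 3. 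As this is the unique weighted orbit space producing $\cp^2\#\cp^2$, the isometric $\T^2_{uv}$ action is equivariantly diffeomorphic to the given abstract one.

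The remaining, essentially routine, point is to confirm that the cyclic order and orientation of the slopes read off from the isotropy diagrams agree with those of Figure 3 up to the reparametrizations (by $\G\mathrm{L}_2(\Z)$) and reorderings permitted in Section \ref{S:dim4}; this reduces to checking that adjacent slopes have determinant $\pm1$, which holds by construction of the free $\T^2_{wz}$ actions. I expect the main obstacle to be the $\cp^2\#\cp^2$ case: unlike the other two manifolds it is \emph{not} covered by Lemma \ref{lem:T2isotropy}, so its isotropy structure must be computed separately, and one must verify that the action (\ref{Eq:unifreeT2}) produces precisely the slopes $(1,1)$ and $(1,2)$ (equivalently $(1,1)$ and $(2,1)$ after the coordinate swap), rather than some other legally weighted configuration.
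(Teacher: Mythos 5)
Your proposal is correct, and for $\sph^2\x\sph^2$ and $\cp^2\#-\cp^2$ it coincides with the paper's argument: both read the slopes $(1,0),(0,1),(1,0),(2r+\lambda,1)$ off Lemma \ref{lem:T2isotropy} and observe that $k=2r+\lambda$ sweeps out all integers with the right parity matching the diffeomorphism type of $\M^4_{r,\lambda}$. The one place you diverge is $\cp^2\#\cp^2$, which you correctly flag as the case not covered by Lemma \ref{lem:T2isotropy}. The paper dispatches it without any computation: since Figure 3 exhibits a \emph{unique} legally weighted orbit space for $\cp^2\#\cp^2$, there is only one smooth, effective $\T^2$ action on it up to equivariant diffeomorphism, so the isometric $\T^2_{uv}$ action on the biquotient $(\sph^3\x\sph^3,\met_0)\bq\T^2_{wz}$ coming from (\ref{Eq:unifreeT2}) must be that action — whatever its weights turn out to be. Your alternative of computing the isotropy directly also works, and your predicted outcome is right: imposing the fixed-point equations for (\ref{Eq:unifreeT2}) gives circle isotropy $\{(u,1)\}$, $\{(u,u)\}$, $\{(u,u^2)\}$, $\{(1,v)\}$ along $[\sph^1\x\sph^3]$, $[\sph^3\x\sph^1_j]$, $[\sph^1_j\x\sph^3]$, $[\sph^3\x\sph^1]$ respectively, i.e.\ the cyclic slope sequence $(1,0),(1,1),(1,2),(0,1)$, which after swapping coordinates and rotating is the right-hand diagram of Figure 3. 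So your route costs an extra computation but yields the explicit weighted orbit space of the biquotient action as a by-product, whereas the paper's uniqueness observation gets the equivariant diffeomorphism for free while leaving those weights implicit.
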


\begin{proof}
From the discussion in Section \ref{S:dim4} we know that there is a unique smooth, effective $\T^2$ action on $\cp^2 \# \cp^2$ up to equivariant diffeomorphism.  It is clear, therefore, that this action must correspond to the effective, isometric $\T^2_{uv}$ action on the biquotient $\cp^2 \# \cp^2 = \sph^3 \x \sph^3 \bq \T^2_{wz}$, for $\T^2_{wz}$ acting via (\ref{Eq:unifreeT2}).

From diagram (\ref{fig:dim4isotropy}) in Lemma \ref{lem:T2isotropy} it follows that the weighted orbit space of a $\T^2_{uv}$ action on $\M^4_{r,\lambda}$ (that is, the diagram of slopes of $\sph^1$ isotropy groups in $\T^2_{uv}$) is given by
\beq
\label{fig:dim4weights}
\xymatrix{
[\sph^1 \x \sph^1] \ar@{.}[rr]^{(1,0)} \ar@{.}[dd]_{(0,1)} & &
[\sph^1 \x \sph^1_j] \ar@{.}[dd]^{(2r+\lambda, 1)} \\ & & \\
[\sph^1_j \x \sph^1] \ar@{.}[rr]_{(1,0)} & &
[\sph^1_j \x \sph^1_j]
}
\eeq
where $r \in \Z$ and $\lambda \in \{0,1\}$.  When $\lambda = 0$ (resp. $\lambda = 1$) it is clear from the discussion in Section \ref{S:dim4} that all possible weighted orbit spaces for $\sph^2 \x \sph^2$ (resp. $\cp^2 \# - \cp^2$) are achieved.
\end{proof}

\begin{rem}
A simple observation is that for each pair $(r,s) \in \Z^2$ there is a three-dimensional orbifold $X^3_{r,s}$ such that $\sph^2 \x \sph^2$ and $\cp^2 \# - \cp^2$ arise as the total spaces of particular $\sph^1$-bundles over $X^3_{r,s}$.  Indeed, the three-torus action on $(\sph^3 \x \sph^3, \met_0)$ defined by
$$
(u,w,z) \star \bpm q_1 \\ q_2 \epm =
\bpm z \bar w^r \bar u^s \alpha_1 + z w^{r+1} u^s \beta_1 j \\
     w \bar u \alpha_2 + w u \beta_2 j \epm
$$
is effective and isometric.  Denote the quotient space $(\sph^3 \x \sph^3, \met_0) \bq T^3_{uwz}$ by $X^3_{r,s}$.  It follows from the work of Perelman \cite{P1,P2} that the orbifold $X^3_{r,s}$ is homeomorphic to $\sph^3$ as a topological manifold.  The subtori given by restriction to the $(z,w)$ and $(z,u)$ coordinates are both of the form (\ref{Eq:inffreeT2}) and have quotients $\cp^2 \# - \cp^2$ and $\sph^2 \x \sph^2$ respectively.  In each case the remaining effective and isometric ($u$ or $w$) circle action on the $4$-manifold has isotropy subgroups arranged according to the diagram
$$
\xymatrix{
[\sph^1 \x \sph^1] \ar@{.}[rr]^{\Z_2}_{[\sph^1 \x \sph^3]} \ar@{.}[dd]_{\Z_{|2(r+s)+1|}}^{[\sph^3 \x \sph^1]} & &
[\sph^1 \x \sph^1_j] \ar@{.}[dd]^{\Z_{|2(r-s)+1|}}_{[\sph^3 \x \sph^1_j]} \\ & & \\
[\sph^1_j \x \sph^1] \ar@{.}[rr]_{\Z_2}^{[\sph^1_j \x \sph^3]} & &
[\sph^1_j \x \sph^1_j]
}
$$
where the vertices are fixed points of the action.  In the context of Fintushel's classification of smooth, effective circle actions on smooth, compact, simply-connected $4$-manifolds via Seifert invariants $(\alpha_i, \beta_i)$ \cite{F1}, this shows that there are infinitely many smooth, effective circle actions on $\cp^2 \# - \cp^2$ and $\sph^2 \x \sph^2$ having the same $\alpha_i$ invariants (i.e. the orders of the isotropy groups), and so these actions must be distinguished by their (harder to compute) $\beta_i$ invariants.
\end{rem}

\begin{rem}
In the cases of $\cp^2 \# \pm \cp^2$ and $\sph^2 \x \sph^2$, Corollary \ref{Cor B} can also be proven directly, without appealing to Theorem A, by determining (up to equivariant diffeomorphism) all possible isometric circle actions on these manifolds in terms of Fintushel's classification of smooth, effective circle actions on smooth, compact, simply connected $4$-manifolds \cite{F1} and then performing computations similar to those in this section to show that each of these actions is realised by an effective, isometric circle action on the corresponding normal biquotient.  

For fixed point homogeneous circle actions, that is, those where the fixed point set of the action has a two-dimensional component (cf. \cite{GS} and \cite{GG}), all possible weighted orbit spaces (hence all such actions on these manifolds) were determined in \cite{GG}.  One can then reach the conclusion of Corollary \ref{Cor B} without using the work of Grove and Wilking \cite{GW}.  To determine the weighted orbit spaces for circle actions which are not fixed point homogeneous, one must combine Fintushel's classification with Theorem~2.4 in \cite{GW}.
\end{rem}

\section{Torus actions on five-dimensional biquotients}
\label{S:5dim}

From Oh's classification of smooth cohomogeneity-two torus actions on smooth $5$-manifolds \cite{Oh83}, we know that there is, up to equivariant diffeomorphism, a unique smooth $\T^3$ action on $\sph^5$.  This is, of course, realized by the linear (isometric) $\T^3$ action on the homogeneous space $\sph^5 = \SO(6)/\SO(5)$.  As we are interested only in those simply connected $5$-manifolds which admit a $\T^3$-invariant metric with non-negative curvature, that is, $\sph^5$, $\sph^3 \x \sph^2$ and $\sph^3 \tilde\x \sph^2$ (cf. \cite{GGS10}), we may restrict our attention from now on to smooth $\T^3$ actions on the manifolds $\sph^3 \x \sph^2$ and $\sph^3 \tilde\x \sph^2$.

Let $\sph^1$ act freely and isometrically on $(\sph^3 \x \sph^3, \met_0)$ via (\ref{Eq:freeS1}).  Denote the quotient $(\sph^3 \x \sph^3, \met_0) \bq \sph^1$ by $\M^5_{a,b,c,d}$.  By the discussion in Section \ref{S:Biquotients}, $\M^5_{a,b,c,d}$ is diffeomorpic to $\sph^3 \x \sph^2$ or $\sph^3 \tilde\x \sph^2$ depending on the parity of $a + b + c + d$.

Consider the effective, isometric $\T^4$ action on $(\sph^3 \x \sph^3, \met_0)$ in (\ref{Eq:effT4}).  The complementary three-dimensional torus $\T^3_{uvw}$ to the $z$-circle in $\T^4$ then acts on $\M^5_{a,b,c,d}$ effectively and isometrically via
\beq
\label{Eq:effT3}
(u,v,w) \star \bbm q_1 \\ q_2 \ebm =
 \bbm \bar w^n \alpha_1 + u w^k \beta_1 j \\
      w^m \alpha_2 + v w^\ell \beta_2 j \ebm
\eeq
where $(u,v,w) \in \T^3_{uvw}$, $\left[\bsm q_1 \\ q_2 \esm \right] \in M^5_{a,b,c,d}$, $k,\ell, m, n \in \Z$ and $am + cn = 1$.

\begin{lem}
\label{lem:T3isotropy}
Let $\T^3_{uvw}$ act effectively and isometrically on $M^5_{a,b,c,d}$ via the action (\ref{Eq:effT3}).  Then the four points $[\sph^1 \x \sph^1]$, $[\sph^1_j \x \sph^1]$, $[\sph^1 \x \sph^1_j]$, $[\sph^1_j \x \sph^1_j] \in M^5_{a,b,c,d}$ each have $\T^2$ isotropy, while a point has $\sph^1$ isotropy if and only if it lies in one of $[\sph^1 \x \sph^3]$, $[\sph^1_j \x \sph^3]$, $[\sph^3 \x \sph^1]$ or $[\sph^3 \x \sph^1_j]$.  In particular, the $\sph^1$ isotropy subgroups of $\T^3_{uvw}$ are arranged according to the diagram:

\beq
\label{fig:dim5isotropy}
\xymatrix{
[\sph^1 \x \sph^1] \ar@{.}[rrr]^{\{(1,v,1)\}}_{[\sph^1 \x \sph^3]} \ar@{.}[dd]_{\{(u,1,1)\}}^{[\sph^3 \x \sph^1]} & & &
[\sph^1 \x \sph^1_j] \ar@{.}[dd]^{\{(\bar w^{bn + ak}, \bar w^{dn + c\ell}, w^a)\}}_{[\sph^3 \x \sph^1_j]} \\ & & & \\
[\sph^1_j \x \sph^1] \ar@{.}[rrr]_{\{(w^{bm - ck}, w^{dm - c\ell}, w^c)\}}^{[\sph^1_j \x \sph^3]} & & &
[\sph^1_j \x \sph^1_j]
}
\eeq
\end{lem}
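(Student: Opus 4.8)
The plan is to mirror the proof of Lemma \ref{lem:T2isotropy}, computing upstairs on $\sph^3 \x \sph^3$ and reading off the isotropy in $\M^5_{a,b,c,d}$ from the $\sph^1$-orbit relation. A class $\left[\bsm q_1 \\ q_2 \esm\right]$ is fixed by $(u,v,w) \in \T^3_{uvw}$ exactly when $(u,v,w) \star \bsm q_1 \\ q_2 \esm$ lies in the same $\sph^1$-orbit as $\bsm q_1 \\ q_2 \esm$, i.e. when there is some $z \in \sph^1$ with
$$
\bpm \bar w^n \alpha_1 + u w^k \beta_1 j \\ w^m \alpha_2 + v w^\ell \beta_2 j \epm
= \bpm z^a \alpha_1 + z^b \beta_1 j \\ z^c \alpha_2 + z^d \beta_2 j \epm .
$$
Matching the $\alpha$- and $\beta j$-components gives the four alternatives: $\alpha_1 = 0$ or $\bar w^n = z^a$; $\beta_1 = 0$ or $u w^k = z^b$; $\alpha_2 = 0$ or $w^m = z^c$; $\beta_2 = 0$ or $v w^\ell = z^d$. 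The argument then splits according to which of $\alpha_1, \beta_1, \alpha_2, \beta_2$ vanish, exactly as in dimension four.

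First I would record the arithmetic that drives every case. Since any common divisor of $m$ and $n$ (resp. of $m$ and $c$, resp. of $a$ and $n$) divides $am + cn = 1$, one has $\gcd(m,n) = \gcd(m,c) = \gcd(a,n) = 1$. For an interior point all four coordinates are nonzero, so all four equations are active; combining $\bar w^n = z^a$ with $w^m = z^c$ and using $am + cn = 1$ forces $z = 1$, then $\gcd(m,n) = 1$ forces $w = 1$, and finally $u = v = 1$, so the isotropy is trivial, as required of a principal orbit. At each of the four vertices $[\sph^1 \x \sph^1]$, $[\sph^1_j \x \sph^1]$, $[\sph^1 \x \sph^1_j]$, $[\sph^1_j \x \sph^1_j]$ exactly two of the four equations survive, and the remaining relations describe the image of $\T^2_{wz}$ under a homomorphism into $\T^3_{uvw}$; this image is a closed connected subgroup of dimension two (nondegeneracy coming from the freeness relations $\gcd(b,d) = 1$, etc.), hence a $2$-torus, establishing the $\T^2$ isotropy at the four vertices.

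The heart of the proof is the four edges, where exactly three equations survive and cut $\T^2_{wz}$ down to a single circle whose image is the isotropy $\sph^1$. On $[\sph^1 \x \sph^3]$ (resp. $[\sph^3 \x \sph^1]$) the $\beta_1$- (resp. $\beta_2$-) equation drops out and the rest force $z = w = 1$, leaving the free circle $\{(u,1,1)\}$ (resp. $\{(1,v,1)\}$). On $[\sph^1_j \x \sph^3]$ the surviving $(w,z)$-relation is $w^m = z^c$; using $\gcd(m,c) = 1$ I would parametrize its solution circle by $(w,z) = (t^c, t^m)$ and substitute into $u = z^b w^{-k}$, $v = z^d w^{-\ell}$ to read off the slope $(bm - ck,\, dm - c\ell,\, c)$. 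On $[\sph^3 \x \sph^1_j]$ the surviving relation is $\bar w^n = z^a$; using $\gcd(a,n) = 1$ I would parametrize by $(w,z) = (t^a, t^{-n})$ to obtain the slope $(-(bn + ak),\, -(dn + a\ell),\, a)$. Since the interior, vertices, and edges exhaust all vanishing patterns of $(\alpha_1, \beta_1, \alpha_2, \beta_2)$, this also yields the asserted ``if and only if'' for $\sph^1$ isotropy, and assembling the four edges around the four vertices produces the diagram.

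The main obstacle is purely bookkeeping: choosing the right generator of each edge circle and keeping straight the signs and the distinct roles of $(a,c)$ versus $(b,d)$ and of $k, \ell, m, n$, so that adjacent edges close up consistently at each shared vertex (note in particular that the factor $w^{-\ell}$ is evaluated at $w = t^a$ on the $[\sph^3 \x \sph^1_j]$ edge but at $w = t^c$ on the $[\sph^1_j \x \sph^3]$ edge). I would use the vertex $\T^2$'s computed in the previous step as a consistency check: each edge slope must lie in the $2$-torus attached to the two vertices it joins, which both pins down the parametrizations and confirms that no exceptional (finite) isotropy occurs, in agreement with the general structure recalled in Section \ref{S:cohom2tori}.
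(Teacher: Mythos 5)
Your proposal is correct and takes essentially the same approach as the paper: a case-by-case analysis of which of $\alpha_1, \beta_1, \alpha_2, \beta_2$ vanish, using $am + cn = 1$ to force $w = z = 1$ whenever both $\alpha$-equations are active, and parametrizing the solution circles of $w^m = z^c$ and $\bar w^n = z^a$ to read off the edge slopes. One remark: where your answers differ from the displayed diagram (\ref{fig:dim5isotropy}) --- you obtain $\{(u,1,1)\}$ on $[\sph^1 \x \sph^3]$ and $\{(1,v,1)\}$ on $[\sph^3 \x \sph^1]$, and the exponent $dn + a\ell$ rather than $dn + c\ell$ --- your version is the correct one, agreeing with the paper's own proof and with the weight diagram (\ref{fig:dim5weights}) used in Proposition \ref{prop:5mnfds}, so the discrepancies are typos in the statement of the lemma rather than errors in your argument.
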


\begin{proof}
Suppose that $\left[\bsm q_1 \\ q_2 \esm \right]$ is fixed by some element $(u,v,w)$ of $\T^3_{uvw}$.  That is, there exists $z \in \sph^1$ such that
$$
\bpm \alpha_1 + \beta_1 j \\
     \alpha_2 + \beta_2 j \epm =
\bpm z^a \bar w^n \alpha_1 + u z^b w^k \beta_1 j \\
     z^c w^m \alpha_2 + v z^d w^\ell \beta_2 j \epm.
$$
Then each of the following four conditions must hold:
\begin{itemize}
\item $\alpha_1 = 0$ or $z^a \bar w^n = 1$;
\item $\beta_1 = 0$ or $u z^b w^k = 1$;
\item $\alpha_2 = 0$ or $z^c w^m = 1$;
\item $\beta_2 = 0$ or $v z^d w^\ell = 1$.
\end{itemize}
By effectiveness of the $\T^3_{uvw}$ action, it is clear that whenever all of $\alpha_1$, $\alpha_2$, $\beta_1$ and $\beta_2$ are non-zero, the isotropy group is trivial.

Suppose first that $\beta_1 = \beta_2 = 0$. Then $z^a \bar w^n = 1$ and $z^c w^m = 1$, from which it follows that $w = z = 1$, since $am + cn = 1$.  Thus the isotropy subgroup of the point $[\sph^1 \x \sph^1]$ is given by
$$
\T^2 \cong \{(u, v, 1)\} \subset \T^3_{uvw}.
$$
If $\alpha_1 = \alpha_2 = 0$, then $u = \bar z^b \bar w^k$ and $v = \bar z^d \bar w^\ell$, since $|\beta_1| = |\beta_2| = 1$.  Therefore the isotropy subgroup of the point $[\sph^1_j \x \sph^1_j]$ is
$$
\T^2 \cong \{(\bar z^b \bar w^k, \bar z^d \bar w^\ell, w)\} \subset \T^3_{uvw}.
$$
Suppose now that $\beta_1 = \alpha_2 = 0$.  Then $z^a \bar w^n = 1$ and $v = \bar z^d \bar w^\ell$.  Let $\zeta \in \sph^1$ such that $\zeta^a = w$.  It follows by setting $z = \zeta^n$ that $z^a = w^n$ and $v = \bar \zeta^{dn + a\ell}$.  Hence the isotropy subgroup of the point $[\sph^1 \x \sph^1_j]$ is given by
$$
\T^2 \cong \{(u, \bar \zeta^{dn + a\ell}, \zeta^a)\} \subset \T^3_{uvw}.
$$
We remark that $\gcd(a, dn+a\ell) = 1$, since $\gcd(ab,cd) = 1$ and $am + cn = 1$.

The analogous computation for $\alpha_1 = \beta_2 = 0$ shows that the isotropy subgroup of the point $[\sph^1_j \x \sph^1]$ is given by
$$
\T^2 \cong \{(\zeta^{bm - ck}, v, \zeta^c)\} \subset \T^3_{uvw}.
$$

Suppose, on the other hand, that $\beta_1 = 0$ and $\alpha_2, \beta_2 \neq 0$.  Then $z^a \bar w^n = 1$, $z^c w^m = 1$ and $v = \bar z^d \bar w^\ell$.  As before it follows from $am + cn = 1$ that $w = z = 1$, hence $v = 1$.  Therefore the isotropy subgroup of a generic point in $[\sph^1 \x \sph^3]$ is
$$
\sph^1 \cong \{(u,1,1)\} \subset \T^3_{uvw}.
$$
Similarly, generic points in $[\sph^3 \x \sph^1]$ (i.e. points for which $\beta_2 = 0$ and $\alpha_1, \beta_1 \neq 0$) have isotropy subgroup
$$
\sph^1 \cong \{(1,v,1)\} \subset \T^3_{uvw}.
$$
If now $\alpha_2 = 0$ and $\alpha_1, \beta_1 \neq 0$, then $v = \bar z^d \bar w^\ell$, $u = \bar z^b \bar w^k$ and $z^a \bar w^n = 1$.  As above, let $\zeta \in \sph^1$ such that $\zeta^a = w$ and set $z = \zeta^n$.  Then $u = \bar \zeta^{bn + ak}$ and $v = \bar \zeta^{dn + a\ell}$.  Therefore a generic point in $[\sph^3 \x \sph^1_j]$ has isotropy subgroup
$$
\sph^1 \cong \{(\bar \zeta^{bn + ak}, \bar \zeta^{dn + a\ell}, \zeta^a)\} \subset \T^3_{uvw}.
$$
The computation to show that a generic point in $[\sph^1_j \x \sph^3]$ (i.e. $\alpha_1 = 0$ and $\alpha_2, \beta_2 \neq 0$) has isotropy subgroup
$$
\sph^1 \cong \{(\zeta^{bm - ck}, \zeta^{dm - c\ell}, \zeta^c)\} \subset \T^3_{uvw}
$$
is completely analogous.
\end{proof}

\begin{prop}
\label{prop:5mnfds}
Every smooth, effective $\T^3$ action on either $\sph^3 \x \sph^2$ or $\sph^3 \tilde\x \sph^2$ is equivariantly diffeomorphic to an effective, isometric $\T^3_{uvw}$ action on the corresponding biquotient $M^5_{a,b,c,d} = (\sph^3 \x \sph^3, \met_0) \bq \sph^1$.
\end{prop}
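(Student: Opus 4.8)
The plan is to reduce the statement to a comparison of weighted orbit spaces and then to invoke Oh's equivariant classification from Section \ref{S:dim5}. Given a smooth, effective $\T^3$ action on $\sph^3 \x \sph^2$ or $\sph^3 \tilde\x \sph^2$, the discussion in Section \ref{S:dim5} shows that, after reparametrizing by a suitable automorphism of $\T^3$ (which is an equivariant diffeomorphism), its weighted orbit space has the form of Figure 7 for some integers $a,b,c,d,k,\ell,m,n$ with $am + cn = 1$ and $\gcd(a,c) = \gcd(a,d) = \gcd(b,c) = \gcd(b,d) = 1$. These are exactly the conditions under which the circle action (\ref{Eq:freeS1}) on $(\sph^3 \x \sph^3, \met_0)$ is free, so the biquotient $M^5_{a,b,c,d}$ is defined and carries the induced effective, isometric $\T^3_{uvw}$ action (\ref{Eq:effT3}).

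I would then read off the weighted orbit space of this isometric action directly from Lemma \ref{lem:T3isotropy}. The four points $[\sph^1 \x \sph^1]$, $[\sph^1 \x \sph^1_j]$, $[\sph^1_j \x \sph^1]$, $[\sph^1_j \x \sph^1_j]$ have $\T^2$ isotropy and play the role of the fixed points on the boundary of the orbit space, while the four boundary arcs joining them, namely $[\sph^1 \x \sph^3]$, $[\sph^3 \x \sph^1]$, $[\sph^1_j \x \sph^3]$ and $[\sph^3 \x \sph^1_j]$, carry $\sph^1$ isotropy of slopes $(1,0,0)$, $(0,1,0)$, $(bm - ck, dm - c\ell, c)$ and $(-(bn + ak), -(dn + a\ell), a)$ respectively. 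Placed cyclically around the disk in the order prescribed by Lemma \ref{lem:T3isotropy}, these are exactly the weights $\underline x_1, \underline x_2, \underline x_3, \underline x_4$ of Figure 7, for the same parameter values.

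Since the weighted orbit space of the isometric $\T^3_{uvw}$ action on $M^5_{a,b,c,d}$ then coincides with that of the given action, Oh's theorem---that two smooth, effective $\T^3$ actions on $5$-manifolds are equivariantly diffeomorphic precisely when their orbit spaces admit a weight-preserving diffeomorphism---immediately provides the required equivariant diffeomorphism. In particular the underlying manifolds are diffeomorphic, so the diffeomorphism type ($\sph^3 \x \sph^2$ versus $\sph^3 \tilde\x \sph^2$) is matched automatically; this is consistent with the identity $w_2(M^5_{a,b,c,d}) = a + b + c + d \in \Z_2$ from Section \ref{S:Biquotients}, which is an invariant of the orbit space.

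Most of the substance is already contained in Lemma \ref{lem:T3isotropy} and in the derivation of Figure 7, so what remains is largely bookkeeping, closely parallel to the proof of Proposition \ref{prop:4mnfds}. The step that demands care---and which I regard as the main point of the argument---is verifying that the cyclic arrangement of the four slopes coming from Lemma \ref{lem:T3isotropy} agrees with the cyclic arrangement in Figure 7, and not with its reverse: one must confirm that the arcs of slope $(1,0,0)$ and $(0,1,0)$ are adjacent, and that each is adjacent to the correct one of the two remaining arcs. Once this adjacency is checked, the freeness conditions for (\ref{Eq:freeS1}) coincide with the legality conditions for Figure 7, and no further computation is needed.
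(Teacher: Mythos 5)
Your proposal is correct and follows essentially the same route as the paper: the paper's proof likewise reads the weighted orbit space of the $\T^3_{uvw}$ action off from Lemma \ref{lem:T3isotropy}, observes that it realises every legally weighted orbit space of Figure 7, and concludes via Oh's weight-preserving classification. Your explicit attention to the cyclic arrangement of the four arcs is a point the paper passes over silently, but it does not change the argument.
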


\begin{proof}
From diagram \ref{fig:dim5isotropy} in Lemma \ref{lem:T3isotropy} it follows that the weighted orbit space of an effective, isometric $\T^3$ action on $M^5_{a,b,c,d}$ (that is, the diagram of slopes of $\sph^1$ isotropy groups in $\T^3_{uvw}$) is given by
\beq
\label{fig:dim5weights}
\xymatrix{
[\sph^1 \x \sph^1] \ar@{.}[rrr]^{(1,0,0)} \ar@{.}[dd]_{(0,1,0)} & & &
[\sph^1 \x \sph^1_j] \ar@{.}[dd]^{(- bn - ak, - dn - a\ell, a)} \\ & & & \\
[\sph^1_j \x \sph^1] \ar@{.}[rrr]_{(bm - ck, dm - c\ell, c)} & & &
[\sph^1_j \x \sph^1_j]
}
\eeq
where $\gcd(ab, cd) = 1$, $am + cn = 1$, and $k, \ell \in \Z$.  It is then clear from the discussion in Section \ref{S:dim5} that all possible (legally) weighted orbit spaces for $\sph^3 \x \sph^2$ and $\sph^3 \tilde\x \sph^2$ are achieved.
\end{proof}


\section{Principal circle bundles}
\label{S:P_bundles}

Let $\B$ be an arbitrary simply connected manifold.  It is well known that oriented (hence principal) circle bundles over $\B$ are classified by their Euler classes in the second integral cohomology $H^2 (\B; \Z)$.  Since $\B$ is simply connected, the total space $\p$ of the bundle is simply connected if and only if the Euler characteristic is primitive, i.e. a generator of $H^2 (\B; \Z)$.

As a consequence of their analysis of oriented circle bundles over compact, simply connected $4$-manifolds, Duan and Liang \cite{DL} have shown that if $\p$ is simply connected and the total space of a principal circle bundle over $\sph^2 \times \sph^2$ or $\cp^2 \# \pm \cp^2$, then it must be diffeomorphic to one of $\sph^3 \times \sph^2$ or $\sph^3 \tilde{\x} \sph^2$.  On the other hand, Grove and Ziller \cite[Theorem 4.5]{GZ} have observed that simply connected principal circle bundles over $\sph^2 \times \sph^2$ and $\cp^2 \# \pm \cp^2$ all arise by considering circle sub-actions of free (isometric) torus actions on $\sph^3 \x \sph^3$, namely
$$
\sph^1 = \T^2/ \sph^1 \lra (\sph^3 \x \sph^3) \bq \sph^1 \lra (\sph^3 \x \sph^3) \bq \T^2.
$$

From the discussions in previous sections, one can make the following observation.

\begin{prop}
\label{prop:PB}
Suppose $\T^2$ acts freely and isometrically on $\sph^3 \x \sph^3$.  Embed $\sph^1$ into $\T^2$ via $z \mapsto (z^p, z^q)$, $p,q \in \Z$, $\gcd(p,q)= 1$, and denote this circle subgroup $\sph^1_{p,q}$.  Consider the principal $\sph^1$-bundle
$$
\sph^1 = \T^2/\sph^1_{p,q} \lra X^5_{p,q} := (\sph^3 \x \sph^3) \bq \sph^1_{p,q} \lra \M^4 := (\sph^3 \x \sph^3) \bq \T^2.
$$
\begin{itemize}
\item[(a)]If $\M^4 = \sph^2 \x \sph^2$, then $X^5_{p,q} = \sph^3 \x \sph^2$ for all $p,q$.

\item[(b)] If $\M^4 = \cp^2 \# -\cp^2$, then $X^5_{p,q} = \sph^3 \x \sph^2$ for $p$ even and $X^5_{p,q} = \sph^3 \tilde\x \sph^2$ for $p$ odd.

\item[(c)] If $\M^4 =\cp^2 \# \cp^2$, then $X^5_{p,q} = \sph^3 \x \sph^2$ for $p+q$ even and $X^5_{p,q} = \sph^3 \tilde\x \sph^2$ for $p+q$ odd.

\end{itemize}
\end{prop}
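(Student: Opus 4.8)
The strategy is to recognise $X^5_{p,q}$ as a biquotient of the form $\M^5_{a,b,c,d}$ and then invoke the parity criterion for $w_2$ recorded in Section~\ref{S:Biquotients}. Since $\sph^1_{p,q}$ is a subgroup of the freely acting torus $\T^2$, it too acts freely on $\sph^3 \x \sph^3$, so $X^5_{p,q} = (\sph^3 \x \sph^3, \met_0) \bq \sph^1_{p,q}$ is of the type $\M^5_{a,b,c,d}$. By DeVito's classification it is therefore diffeomorphic to $\sph^3 \x \sph^2$ or $\sph^3 \tilde\x \sph^2$, the two being distinguished by $w_2(\M^5_{a,b,c,d}) = a + b + c + d \in \Z_2$. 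Thus it suffices, in each of the three cases, to present the restricted circle action in the shape (\ref{Eq:freeS1}) and to compute the parity of $a+b+c+d$.

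Concretely, I would first bring the free $\T^2$ action into the relevant normal form from Section~\ref{S:Biquotients}: action (\ref{Eq:inffreeT2}) with $\lambda = 0$ for $\M^4 = \sph^2 \x \sph^2$, the same action with $\lambda = 1$ for $\M^4 = \cp^2 \# -\cp^2$, and action (\ref{Eq:unifreeT2}) for $\M^4 = \cp^2 \# \cp^2$. Writing the embedded circle as $t \mapsto (w,z) = (t^p, t^q)$, $t \in \sph^1$, and substituting into the chosen normal form immediately exhibits the $\sph^1_{p,q}$-action in the form (\ref{Eq:freeS1}), from which the slope $(a,b,c,d)$ can be read off directly.

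For (a) and (b), substitution into (\ref{Eq:inffreeT2}) yields the slope $(a,b,c,d) = (q - pr,\; q + p(r+\lambda),\; p,\; p)$, so that $a+b+c+d \equiv p\lambda \pmod{2}$. This vanishes when $\lambda = 0$, giving $X^5_{p,q} = \sph^3 \x \sph^2$ for all $p,q$, and is congruent to $p$ when $\lambda = 1$, giving $X^5_{p,q} = \sph^3 \x \sph^2$ for $p$ even and $\sph^3 \tilde\x \sph^2$ for $p$ odd. For (c), substitution into (\ref{Eq:unifreeT2}) gives $(a,b,c,d) = (q,\; p,\; p-q,\; p+q)$, whence $a+b+c+d = 3p + q \equiv p + q \pmod{2}$, so that $X^5_{p,q} = \sph^3 \x \sph^2$ for $p+q$ even and $\sph^3 \tilde\x \sph^2$ for $p+q$ odd. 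In each case the $w_2$-criterion then identifies $X^5_{p,q}$ exactly as claimed.

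I do not anticipate a serious obstacle: the argument reduces to a direct substitution once the normal forms of Section~\ref{S:Biquotients} are in hand. The only point meriting care is bookkeeping, namely selecting the correct normal form for each $\M^4$ and understanding the slope $(p,q)$ of the embedded circle relative to those normal-form coordinates on $\T^2$; since the resulting diffeomorphism type is governed solely by $a+b+c+d \bmod 2$, the parity count above then settles all three statements.
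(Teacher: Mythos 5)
Your proposal is correct and follows essentially the same route as the paper's own proof: restrict the normal forms (\ref{Eq:inffreeT2}) and (\ref{Eq:unifreeT2}) to the circle $\sph^1_{p,q}$, read off the slope $(a,b,c,d)$ as in (\ref{Eq:freeS1}), and apply the parity criterion $w_2 = a+b+c+d \bmod 2$. Your computed slopes $(q-pr,\, q+p(r+\lambda),\, p,\, p)$ and $(q,\, p,\, p-q,\, p+q)$ agree with the paper's (the latter case the paper leaves as ``similar''), so nothing is missing.
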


\begin{proof}
Recall that every free, isometric $\T^2$ action on $\sph^3 \x \sph^3$ is equivalent to one of the $\T^2_{wz}$ actions (\ref{Eq:inffreeT2}) or (\ref{Eq:unifreeT2}) described in Section \ref{S:dim4}.  First consider $\sph^1_{p,q}$ as a sub-action of (\ref{Eq:inffreeT2}).  Then $\sph^1_{p,q}$ acts on $\sph^3 \x \sph^3$ via
$$
z \star \bpm q_1 \\ q_2 \epm =
\bpm z^{q-pr} \alpha_1 + z^{q + p(r + \lambda)} \beta_1 j \\
     z^p \alpha_2 + z^p \beta_2 j \epm.
$$
From (\ref{Eq:freeS1}) it follows that $a = q - pr$, $b = q + p(r + \lambda)$, $c = p$ and $d = p$.  Since $w_2(X^5_{p,q}) = a + b + c + d \in \Z_2$, in the cases of $\M^4 = \sph^2 \x \sph^2$ and $\M^4 = \cp^2 \# -\cp^2$ we are done.  The case $\M^4 = \cp^2 \# \cp^2$ is similar.
\end{proof}

It is now natural to ask whether every free (isometric) $\sph^1$ action on $\sph^3 \x \sph^3$ extends to a free (isometric) $\T^2$ action, and hence defines a principal circle bundle as above.  This amounts to asking whether the effective $\T^3$ action in (\ref{Eq:effT3}) contains a free circle sub-action.

\begin{lem}
\label{lem:extend}
Let $\M^5_{a,b,c,d} = (\sph^3 \x \sph^3, \met_0) \bq \sph^1$ be as in Section \ref{S:Biquotients} and let $\T^3_{uvw}$ act on $\M^5_{a,b,c,d}$ via the action (\ref{Eq:effT3}).  Up to reparametrization, a circle embedded in $\T^3_{uvw}$ via $u \mapsto (u^p,u^q,u^r)$, $\gcd(p,q,r) = 1$, acts freely on $\M^5_{a,b,c,d}$ if and only if $r = 1$ and
$$
a(q + \ell) + dn = \pm 1, \quad c(p + k) - bm = \pm 1, \ \ {\rm and} \ \ b(q + \ell) - d(p + k) = \pm 1
$$
for some choice of signs.  Furthermore, a necessary condition for such an $\sph^1$ action to be free is
\beq
\label{Eq:S1act}
bd \pm ac \pm ad \pm bc = 0
\eeq
for some choice of signs.
\end{lem}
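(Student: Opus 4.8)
The plan is to reduce freeness of the embedded circle on the quotient $\M^5_{a,b,c,d}$ to freeness of a torus action on $\sph^3 \x \sph^3$, and then to obtain (\ref{Eq:S1act}) as the compatibility condition of a degenerate linear system. First I would observe that, since the $z$-circle (\ref{Eq:freeS1}) already acts freely on $(\sph^3 \x \sph^3, \met_0)$ with quotient $\M^5_{a,b,c,d}$, the circle $\sph^1_{p,q,r} = \{u \mapsto (u^p,u^q,u^r)\} \subset \T^3_{uvw}$ acts freely on $\M^5_{a,b,c,d}$ if and only if the $2$-torus $\T^2 \subset \T^4$ spanned by the $z$-circle and $\sph^1_{p,q,r}$ acts freely on $\sph^3 \x \sph^3$. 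Indeed, $\gcd(p,q,r)=1$ makes $\sph^1_{p,q,r}$ a genuine circle meeting the $z$-circle trivially, so the two span a $\T^2$, and a circle acts freely on the free quotient by $\sph^1_z$ precisely when its $\sph^1_z$-preimage torus acts freely on the total space. Substituting $u \mapsto (u^p,u^q,u^r)$ into (\ref{Eq:effT4}) presents this $\T^2$ action as
\[
(z,u) \star \bpm q_1 \\ q_2 \epm = \bpm z^a u^{-rn}\alpha_1 + z^b u^{p+rk}\beta_1 j \\ z^c u^{rm}\alpha_2 + z^d u^{q+r\ell}\beta_2 j \epm.
\]

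Exactly as the criteria (\ref{Eq:freeS1}) and (\ref{Eq:T2freeness}) were obtained, I would then restrict to the four invariant $2$-tori $\sph^1 \x \sph^1$, $\sph^1_j \x \sph^1_j$, $\sph^1 \x \sph^1_j$ and $\sph^1_j \x \sph^1$: the displayed $\T^2$ action is free if and only if on each of these the relevant $2\x 2$ exponent matrix in the variables $(z,u)$ has determinant $\pm 1$ (the mixed strata such as $\sph^1 \x \sph^3$ contain one of these four as their degenerate locus and impose no further constraint, and generic points have trivial isotropy by effectiveness). The determinant coming from $\sph^1 \x \sph^1$ is $r(am+cn)=r$, so freeness forces $r=\pm1$; after reparametrizing $u \mapsto \bar u$ we may take $r=1$. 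With $r=1$, the three remaining determinants are $a(q+\ell)+dn$, $c(p+k)-bm$ and $b(q+\ell)-d(p+k)$, and requiring each to equal $\pm1$ gives precisely the stated conditions, proving the first assertion.

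For the necessary condition, I would regard the three freeness conditions together with $am+cn=1$ as a linear system in the unknowns $P := p+k$, $Q := q+\ell$, $m$, $n$:
\[
am + cn = 1, \quad aQ + dn = \vep_1, \quad cP - bm = \vep_2, \quad bQ - dP = \vep_3,
\]
with each $\vep_i = \pm1$. The key point is that the $4\x4$ coefficient matrix is \emph{singular}: a direct expansion gives determinant $0$. Hence the system is solvable only if its right-hand side is orthogonal to the left kernel, which I would check is spanned by $(-bc,\,ad,\,ac,\,bd)$. The resulting compatibility condition is
\[
bd - bc\,\vep_1 + ad\,\vep_2 + ac\,\vep_3 = 0,
\]
which is exactly $bd \pm ac \pm ad \pm bc = 0$ for the corresponding choice of signs, establishing (\ref{Eq:S1act}).

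The determinant bookkeeping of the first two paragraphs is routine, paralleling the derivation of (\ref{Eq:T2freeness}). The crux lies in the third paragraph: recognizing that the four scalar equations are linearly dependent --- equivalently, that the $2\x2$ minors of the underlying $4\x2$ exponent array satisfy a Grassmann--Plücker relation --- so that the mere existence of the auxiliary integers $p,q,k,\ell,m,n$ forces a sign-relation among $a,b,c,d$ alone. Exhibiting the explicit null vector $(-bc,ad,ac,bd)$, rather than only asserting that the matrix is singular, is what pins (\ref{Eq:S1act}) down in the clean $\pm$ form. I expect that spotting this hidden dependence, and carefully verifying in the second paragraph that the mixed strata contribute no additional constraints, will be the main obstacles.
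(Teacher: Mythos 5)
Your proposal is correct and follows essentially the same route as the paper: the reduction to the four corner tori and the determinant conditions reproduce the paper's case analysis (including $r=\pm1$ from $am+cn=1$), and your left-kernel compatibility condition is precisely the paper's displayed identity $bd(am+cn)+ac(b(q+\ell)-d(p+k))+ad(c(p+k)-bm)-bc(dn+a(q+\ell))=0$. Only a cosmetic slip: with the equations ordered as you list them, the annihilating covector is $(bd,-bc,ad,ac)$ rather than $(-bc,ad,ac,bd)$, which is what your final relation $bd-bc\,\vep_1+ad\,\vep_2+ac\,\vep_3=0$ correctly uses.
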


\begin{proof}
The $\sph^1$ action on $M^5_{a,b,c,d}$ is given by
$$
u \star \bbm q_1 \\ q_2 \ebm =
\bbm \bar u^{rn} \alpha_1 + u^{p + rk} \beta_1 j  \\
      u^{rm} \alpha_2 + u^{q + r\ell} \beta_2 j \ebm
$$
We may assume that each of $q_n \in \sph^3$, $n = 1,2$, is in one of $\sph^1$ or $\sph^1_j$, since allowing both $\alpha_n$ and $\beta_n$ to be non-zero will only increase the number of freeness conditions to be satisfied.  Then $u \star \left[\bsm q_1 \\ q_2 \esm \right] = \left[\bsm q_1 \\ q_2 \esm \right]$ if and only if there exists some $z \in \sph^1$ such that
$$
\bpm \alpha_1 + \beta_1 j \\ \alpha_2 + \beta_2 j \epm =
\bpm \bar u^{rn} z^a \alpha_1 + u^{p + rk} z^b \beta_1 j  \\
      u^{rm} z^c \alpha_2 + u^{q + r\ell} z^d \beta_2 j \epm.
$$
That is, if and only if there is some $z \in \sph^1$ such that
\begin{enumerate}
\item \label{cond1} $\bar u^{rn} z^a  = 1$ and $u^{rm} z^c = 1$; or
\item \label{cond2} $\bar u^{rn} z^a  = 1$ and $u^{q + r\ell} z^d = 1$; or
\item \label{cond3} $u^{p + rk} z^b = 1$ and $u^{rm} z^c = 1$; or
\item \label{cond4} $u^{p + rk} z^b = 1$ and $u^{q + r\ell} z^d = 1$.
\end{enumerate}
In each case, if $u = 1$ then $z = 1$ by freeness of the $z$ action.  By \ref{cond1} we have $\bar u^{rcn} = \bar z^{ac} = u^{ram}$, from which it follows that $1 = u^{r(am + cn)} = u^r$.  Hence a necessary condition for freeness is that $r = \pm 1$.  Without loss of generality, assume that $r = 1$.

Similarly, \ref{cond2} implies that $\bar u^{dn} = \bar z^{ad} = u^{a(q + \ell)}$, hence $u^{dn + a(q + \ell)} = 1$.  Then $u = 1$ if and only if $dn + a(q + \ell) = \pm 1$.  The conditions \ref{cond3} and \ref{cond4} yield $c(p + k) - bm = \pm 1$ and $b(q + \ell) - d(p + k) = \pm 1$ in the same way.

Finally, equation (\ref{Eq:S1act}) arises by noticing that (for an appropriate choice of signs)
\begin{align*}
bd \pm ac \pm ad \pm bc &= bd(am + cn) + ac(b(q + \ell) - d(p + k)) \\
         &\qquad+ ad(c(p + k) - bm) - bc(dn + a(q + \ell))\\
         &= 0.
\end{align*}
\end{proof}

\begin{cor}
There exist infinitely many free (isometric) $\sph^1$ actions on $\sph^3 \x \sph^3$ which do not extend to a free (isometric) $\T^2$ action, and infinitely many descriptions of $\sph^3 \x \sph^2$ and $\sph^3 \tilde\x \sph^2$ as biquotients which do not admit a free (isometric) $\sph^1$ action.
\end{cor}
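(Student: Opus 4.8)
The plan is to read off both assertions from Lemma~\ref{lem:extend}. Recall that the free isometric $\sph^1$ actions on $\sph^3\x\sph^3$ are exactly those of the form (\ref{Eq:freeS1}), parametrised by tuples $(a,b,c,d)$ with $\gcd(ab,cd)=1$, and that such an action extends to a free isometric $\T^2$ action precisely when the complementary torus $\T^3_{uvw}$ of (\ref{Eq:effT3}) contains a free circle sub-action. By Lemma~\ref{lem:extend}, the existence of such a sub-action forces the relation (\ref{Eq:S1act}), namely $bd\pm ac\pm ad\pm bc=0$, to hold for some choice of signs. Hence it suffices to exhibit infinitely many tuples $(a,b,c,d)$ with $\gcd(ab,cd)=1$ for which the expression $bd\pm ac\pm ad\pm bc$ is nonzero for \emph{every} choice of signs: each such tuple yields a free $\sph^1$ action that does not extend to a free $\T^2$ action, and equivalently a biquotient description $\M^5_{a,b,c,d}$ admitting no free isometric circle sub-action of the type appearing in Proposition~\ref{prop:PB}. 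The same examples will therefore establish both statements, once I arrange for each of $\sph^3\x\sph^2$ and $\sph^3\tilde\x\sph^2$ to occur among the quotients.

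The device I would use to make (\ref{Eq:S1act}) fail for all eight sign choices at once---without analysing the eight expressions individually, several of which (unlike the four that factor as $(\pm a\pm b)(\pm c\pm d)$) do not factor---is a dominance estimate. If $a,b,c,d$ are positive and $bd>ac+ad+bc$, then, since $bd$ always carries a positive sign,
\[
bd\pm ac\pm ad\pm bc\;\geq\;bd-(ac+ad+bc)\;>\;0
\]
for every choice of signs, so (\ref{Eq:S1act}) can never hold.

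It remains to supply two families satisfying this estimate, one of each parity of $a+b+c+d$, since by Section~\ref{S:Biquotients} the quotient $\M^5_{a,b,c,d}$ is $\sph^3\tilde\x\sph^2$ or $\sph^3\x\sph^2$ according as $a+b+c+d$ is odd or even. For the odd case I would take $(a,b,c,d)=(1,N,1,N+1)$ with $N\geq 3$: here $\gcd(ab,cd)=\gcd(N,N+1)=1$, the sum $2N+3$ is odd, and $bd=N(N+1)>2N+2=ac+ad+bc$. For the even case I would take $(1,N,2,N+1)$ with $N$ odd and $N\geq 5$: freeness holds (the hypothesis $N$ odd giving $\gcd(N,2)=1$), the sum $2N+4$ is even, and $bd=N(N+1)>3N+3=ac+ad+bc$. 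As the weights are unbounded, only finitely many members of either family can be equivariantly equivalent to a fixed one, so each family contains infinitely many inequivalent actions, hence infinitely many biquotient descriptions of the corresponding manifold.

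The only real difficulty here is combinatorial rather than geometric: one must simultaneously meet the four coprimality conditions ensuring freeness, prescribe the parity of $a+b+c+d$ so as to realise both $\sph^3\x\sph^2$ and $\sph^3\tilde\x\sph^2$, and keep all eight signed sums away from zero. The dominance inequality $bd>ac+ad+bc$ is what lets me bypass a case-by-case analysis of (\ref{Eq:S1act}), and the choice $a=1$, $c\in\{1,2\}$, $b=N$, $d=N+1$ is the simplest way to force $bd$ to dominate while controlling coprimality and parity at the same time.
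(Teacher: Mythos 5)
Your proposal is correct and follows essentially the same route as the paper: both reduce the statement to Lemma \ref{lem:extend} and exhibit explicit infinite families of weights $(a,b,c,d)$ with $\gcd(ab,cd)=1$, realising both parities of $a+b+c+d$, for which equation (\ref{Eq:S1act}) fails for every choice of signs. The only difference is cosmetic: the paper uses the single family $(a,b,c,d)=(-1,3,15k+1,5)$ and rules out (\ref{Eq:S1act}) by a congruence modulo $5$, whereas you use two positive families and the dominance estimate $bd>ac+ad+bc$; both verifications are valid.
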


\begin{proof}
Let $\sph^1$ act on $\sph^3 \x \sph^3$ via (\ref{Eq:freeS1}) with $a = -1$, $b = 3$, $c = 15k + 1$ and $d = 5$, $k \in \Z$.  The action is free, since $\gcd(ab, cd) = 1$.  Since the second Stiefel-Whitney class of the quotient $\M^5_{a,b,c,d}$ is $w_2(\M^5_{a,b,c,d}) = a + b + c + d = k \in \Z_2$, it therefore follows that both $\sph^3 \x \sph^2$ and $\sph^3 \tilde\x \sph^2$ arise, depending on the parity of $k$.  Finally, we note that equation (\ref{Eq:S1act}) can never be satisfied for any value of $k$.
\end{proof}


\end{document}